\documentclass[12pt, reqno]{amsart}
\usepackage{amsmath, amsthm, amscd, amsfonts, amssymb, graphicx, color, mathrsfs}
\usepackage[bookmarksnumbered, colorlinks, plainpages]{hyperref}
\hypersetup{colorlinks=true,linkcolor=red, anchorcolor=green, citecolor=cyan, urlcolor=red, filecolor=magenta, pdftoolbar=true}
\usepackage{hyperref}
\newtheorem{theorem}{Theorem}[section]
\newtheorem{lemma}[theorem]{Lemma}

\newtheorem{corollary}[theorem]{Corollary}
\theoremstyle{definition}
\newtheorem{definition}[theorem]{Definition}

\theoremstyle{remark}

\numberwithin{equation}{section}

\begin{document}

\setcounter{page}{1}

\title[Non-local time problem for the Rayleigh--Stokes ...]{Non-local time problem for the Rayleigh--Stokes type fractional equations
}

\author[R.  Ashurov, Y. Fayziyev and  N. Khushvaktov]{Ravshan Ashurov, Yusuf Fayziyev and Nuriddin Khushvaktov}

\address{\textcolor[rgb]{0.00,0.00,0.84}{Ravshan Ashurov:
V. I. Romanovskiy Institute of Mathematics, 
Uzbekistan Academy of Sciences,
University Street 9, 
Tashkent 100174, 
Uzbekistan
\\
Central Asian University, 
Mirzo Ulugbek district, 
Tashkent, 111221, 
Uzbekistan
}}
\email{\textcolor[rgb]{0.00,0.00,0.84}{ashurov@gmail.com}}

\address{\textcolor[rgb]{0.00,0.00,0.84}{Yusuf Fayziyev:
National University of Uzbekistan,
University Street 4, 
Tashkent 100174, 
Uzbekistan
}}
\email{\textcolor[rgb]{0.00,0.00,0.84}{fayziev.yusuf@mail.ru}}

\address{\textcolor[rgb]{0.00,0.00,0.84}{Nuriddin Khushvaktov:
Tashkent International University 
of Financial Management and Technologies, 
Amir Temur Shoh Street 15,
Tashkent 100047, 
Uzbekistan
}}
\email{\textcolor[rgb]{0.00,0.00,0.84}{nuriddinh@gmail.com; n.xushvaqtov@tift.uz}}

\thanks{All authors contributed equally to the manuscript and read and approved the final manuscript.}

\let\thefootnote\relax\footnote{$^{*}$Ravshan Ashurov}

\subjclass[2020]{35R11, 34A55}

\keywords{Rayleigh--Stokes equation, Laplace transform,  the Caputo derivative, Fourier method, initial boundary-value, non-local, and backward problems.}

\begin{abstract} Despite the growing interest in fractional generalizations of classical fluid dynamics equations, the fractional Rayleigh--Stokes problem has previously been studied almost exclusively using the Riemann--Liouville fractional derivative. To the authors' knowledge, an explicit analytical form of the solution for the Caputo derivative case has not been established in the literature, and before this work, no systematic study of the existence, uniqueness, or regularity properties of this formulation has been conducted.

In this paper, we fill this gap by considering the Rayleigh--Stokes equation with the Caputo fractional time derivative of order $\rho \in (0, \, 1)$. Using the Laplace transform and Fourier methods, as well as special functions, we perform a rigorous well-posedness analysis of the corresponding initial boundary-value, non-local, and backward problems.  
\end{abstract} \maketitle
\tableofcontents

\section{Introduction}

The Rayleigh--Stokes problem (also known as the Stokes first problem with an additional inertial term) describes the unsteady flow of a viscous incompressible fluid over an infinite flat plate that is impulsively started or oscillated in its own plane. This classical model has important applications in various engineering and physical contexts, such as boundary-layer control, channel start flows, and certain rheological processes involving viscoelastic fluids \cite{AshR96}.

The incorporation of fractional time derivatives into the Rayleigh--Stokes equation enables the modeling of anomalous diffusion and memory effects characteristic of complex fluids with non-local viscoelastic properties, porous media, or materials exhibiting power-law relaxation. A typical fractional Rayleigh--Stokes equation has the form
\[
\frac{\partial u}{\partial t} = \nu \left(1 + \tau \partial_{t}^{\rho} \right) \frac{\partial^2 u}{\partial x^2} + f(t),
\]
where $\nu$ is the kinematic viscosity, $\tau$ is a relaxation parameter, and $\partial_t^{\rho}$ denotes a Riemann--Liouville fractional derivative of order $0 < \rho < 1$ (sub-diffusive regime) \cite{AshR96, AshR200}.

In recent years, fractional-derivative models of the Rayleigh--Stokes equation have been developed, and analytical and numerical solutions for generalized second-grade fluids have been investigated \cite{AshR99,AshR220,AshR230}. In some studies, inverse problems for the Rayleigh--Stokes equation, including the identification of the source term, have been considered using Tikhonov regularization and other stabilization techniques \cite{AshR240,AshR250}. In addition, several works are devoted to the reconstruction of initial conditions under random noise and the development of the corresponding regularization algorithms \cite{AshR280}.

The backward and periodic problems for the
Rayleigh--Stokes equations were considered in \cite{AshR260, AshR270, AshR98}, while a non-local problem for the fractional Rayleigh--Stokes equation was studied in \cite{AshR99}. Let us note one more work \cite{AshR275}, where for the Rayleigh--Stokes problem, an inverse problem of determining the order of the fractional derivative was posed and investigated.

If $\dfrac{\partial}{\partial t}$ is replaced by $D_{t}^{\rho}$ with $\rho \in (0, \, 1)$ the equation may lead to fractional models of the Barenblatt--Zheltov-Kochina type.
The forward and inverse problems for such equations were considered in \cite{AshR310}, and non-local problems in time were studied in \cite{AshR320}. These results show that the choice of the fractional operator has a significant influence on the stability and uniqueness of the solution.

The presence of the classical first-order time derivative $\dfrac{\partial u}{\partial t}$ on the left-hand side implies that the solution $u(x,t)$ is assumed to be (at least) once differentiable with respect to time in the classical sense. This regularity assumption makes the use of the Caputo fractional derivative particularly natural and convenient. Specifically, the Caputo derivative $D_t^{\rho} u(t)$ is defined in such a way that it coincides with the classical derivative of order $n = 1$ applied to the fractional integral of $u$, which requires existence derivates of the corresponding order of integers. In contrast, the Riemann--Liouville derivative $\partial_t^{\rho} u(t)$ does not impose such a requirement on the classical differentiability of $u$ and is often preferred in models where only weaker regularity is expected.

Thus, in the context of the Rayleigh--Stokes equation, where the classical first derivative is explicitly present, and its existence is postulated, replacing the Riemann--Liouville derivative with the Caputo derivative is fully justified and leads to a mathematically consistent formulation with physically interpretable initial conditions (e.g., initial velocity profile $u(x,0)$).

In the present paper, we study the backward and non-local problems for the Rayleigh--Stokes equation involving the Caputo derivative. The main purpose of the paper is to obtain conditions that ensure the well-posedness of initial boundary-value, non-local, and backward problems.

Let us start with a few definitions to frame the problem.

In a Hilbert space, the operator $-\dfrac{\partial^2}{\partial x^2}$ with boundary conditions is a self-adjoint operator. In order to consider the general case, we consider an abstract self-adjoint operator $A$.
Let $H$ be a separable Hilbert space, and let $A: H \to H$ be a
self-adjoint, positive, and unbounded operator with the domain $D(A)$.
We denote by $\{v_k\}$ a complete orthonormal system consisting of the eigenfunctions of $A$ and by $\{\lambda_k\}$ the corresponding positive eigenvalues. These eigenvalues may be arranged in a non-decreasing sequence such that
\[
0<\lambda_1 \le \lambda_2 \le \cdots \to +\infty .
\]

Consider a vector-valued function $h(t)$ defined in $[0,+\infty)$ and taking values in the Hilbert space $H$. For $\rho\in(0,1)$, the Caputo fractional derivative (see, e.g., \cite{AshR100}) and Riemann--Liouville (see, e.g., \cite{AshR102}) fractional derivative of order $\rho$ are defined by
\[
D_{t}^{\rho} h(t)=\dfrac{1}{\Gamma(1-\rho)} \int\limits\limits _{0}^{t} \dfrac{h'(\xi)}{(t-\xi)^{\rho}} d\xi, \quad
\partial_{t}^{\rho} h(t)=\dfrac{1}{\Gamma(1-\rho)} \dfrac{d}{dt} \int\limits\limits _{0}^{t} \dfrac{h(\xi)}{(t-\xi)^{\rho}} d\xi,
\quad t>0,
\]
provided that the right-hand sides exist.

Let $\rho \in (0,1)$ and $C((a, b); \, H)$ stand for a set of continuous functions $u(t)$ of $t \in (a, b)$ with values in $H$.

Let us consider the following Cauchy problem and the non-local problems:
\begin{equation}\label{Rav371}
\left\{
\begin{array}{ll}
D_{t} u(t)+A(1+\gamma D_{t}^{\rho}) u(t)=f(t), \quad \gamma>0, \quad 0< t \leq T, \quad D_t=\dfrac{d}{dt};\\
u(0)=\phi,
\end{array}
\right.
\end{equation}
\begin{equation}\label{Rav1}
\left\{
\begin{array}{ll}
D_{t} u(t)+A(1+\gamma D_{t}^{\rho})u(t)=f(t), \quad 0<t \leq T; \\
u(T)=\alpha u(0)+\varphi,
\end{array}
\right.
\end{equation}
where $\phi, \varphi \in H$ are  given vectors, $f \in C([0, \, T]; \, H)$ is a given function. The problem (\ref{Rav371}) is also known as \emph{the forward problem}.
The parameter
$\gamma>0$ is a constant and $\alpha$ is equal to $0$ or $1$. If $\alpha=1$, then the problem (\ref{Rav1}) is called \emph{the non-local problem}; if $\alpha=0$, it is called \emph{the backward problem}. Both cases will be considered.

Let us provide a definition for the solutions of the forward and non-local problems.
\begin{definition}\label{Rav10}
A function $u(t) \in C([0,T]; \, H)$ is called a solution of the forward, or non-local problem for the Rayleigh–Stokes equation if $D_t u(t)$, $Au(t)$, and $AD_t^\rho u(t)$ belong to $C((0,T]; \, H)$, and $u(t)$ satisfies the conditions \eqref{Rav371} for the forward problem or \eqref{Rav1} for non-local  problem, respectively.
\end{definition}

In Section 2 (Preliminaries), we introduce the order and domain of definition of the operator $A$, the Laplace transform and its inverse, and the Laplace transform of the Caputo fractional derivative. We also study the Cauchy problem for the homogeneous Rayleigh--Stokes ordinary fractional differential equation. In Section 3 (Forward Problem), we consider the Cauchy problem for the Rayleigh--Stokes equation. It should be emphasized that this specific problem has not been investigated previously.
In Section 4 (Auxiliary Problem), Problem \eqref{Rav1} is decomposed into two auxiliary problems for the case $\alpha = 1$. The first auxiliary problem coincides with the case studied in the previous section under zero initial conditions. The second leads to a non-local problem that is analyzed separately.
Finally, in Section 5, the backward problem for Problem \eqref{Rav1} is investigated for case $\alpha = 0$.

\section{Preliminaries}
In this section, we utilize fractional powers of the operator $A$ to construct a corresponding Hilbert space. We also recall the Laplace transform and key properties of the functions $A_{\rho}(\lambda,t)$ and $B_{\rho}(\lambda,t)$ introduced in \cite{AshR96}, as these will be essential in the sequel.

Let $\tau$ be an arbitrary real parameter. The operator $A$ acting on $H$ admits a fractional power, which we define as follows (the positivity of $A$ implies $\lambda_k > 0$ for all $k$):
\[A^\tau h = \sum_{k=1}^{\infty} \lambda_k^\tau h_k v_k,\]
where the numbers 
\[h_k = (h,v_k)\]
are the Fourier coefficients of $h \in H$ with respect to the orthonormal basis $\{v_k\}$.

The domain of $A^\tau$ consists of all vectors $h \in H$ such that $A^\tau h \in H$; that is,
\[D(A^\tau) = \{ h \in H : \sum_{k=1}^{\infty} \lambda_k^{2\tau} |h_k|^2 < \infty \}.\]

For the elements of $D(A^\tau)$, we introduce the norm
\[\|h\|_\tau^2 = \sum_{k=1}^{\infty} \lambda_k^{2\tau} |h_k|^2 = \|A^\tau h\|^2.\]
Endowed with this norm, the space $D(A^\tau)$ becomes a Hilbert space.

Since  $\lambda_k>0$ for all $k$, there exists a constant $C_\tau >0$, depending on $\tau$, such that  the following estimate is valid
\begin{equation}\label{Rav570}
\|h\|\leq C_\tau \|h\|_\tau,\,\, \tau>0.
\end{equation}

The Laplace transform of a function $y(t)$ is defined by (see \cite{AshR102}, p.18)
\[\mathcal{L}[y](z)=\widehat{y}(z)=\displaystyle\int\limits_{0}^{\infty} e^{-zt} y(t) dt.\]
The inverse Laplace transform has the form
\[\mathcal{L}^{-1} [\widehat{y}](t)=y(t)
=\dfrac{1}{2\pi i}\displaystyle\int_{Br} e^{zt} \widehat{y}(z)\,dz,\]
where $Br = \{z : \operatorname{Re}(z) = \sigma, \, \sigma > 0\}$ is the Bromwich path \cite{AshR1021}.
Let us write the inverse Laplace transform as follows  (see \cite{AshR1023}, p. 254--255)
\begin{equation}\label{Rav701}
y(t)
=\displaystyle\int\limits_{0}^{\infty} e^{-rt} K(r) dr,
\end{equation}
where
\[
K(r)=
-\dfrac{1}{\pi} Im\left\{\widehat{y}(z) \Big|_{z=re^{i \pi}}\right\}
\]

The Laplace convolution of two functions can be written in the form (see \cite{AshR102}, p.19)
\begin{equation}\label{Rav702}
\mathcal{L}^{-1} [\widehat{y}(z) \widehat{f}(z)]
=\mathcal{L}^{-1} [\widehat{y}(z)] \ast \mathcal{L}^{-1} [\widehat{f}(z)](t)
=\displaystyle\int\limits_0^t y(\tau) f(t-\tau)\,d \tau.
\end{equation}

\begin{lemma}\label{Rav201} (see \cite{AshR102}, p.98)
If $0<\rho \leq 1$, then the Laplace transform of the Caputo fractional derivative is given by:
\[
\mathcal{L} [D^{\rho}_{t} y](z)=z^{\rho} \mathcal{L} [y](z)-z^{\rho-1} y(0).
\]
\end{lemma}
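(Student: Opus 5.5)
We prove the formula directly from the definitions, writing the Caputo derivative as a Laplace convolution and then applying \eqref{Rav702}. For $0<\rho<1$ the definition reads
\[
D_t^{\rho} y(t)=\dfrac{1}{\Gamma(1-\rho)}\int_0^t (t-\xi)^{-\rho} y'(\xi)\,d\xi=(g_{1-\rho}\ast y')(t),\qquad g_{\beta}(t)=\dfrac{t^{\beta-1}}{\Gamma(\beta)} .
\]
Throughout we assume that $y$ is absolutely continuous on each $[0,T]$ and of exponential order, so that $\mathcal{L}[y]$ and $\mathcal{L}[y']$ exist for $\operatorname{Re} z$ large. Once the lemma is proved for such $z$, it extends by analytic continuation.

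The proof has three steps.

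\emph{Step 1.} Compute $\mathcal{L}[g_\beta](z)=z^{-\beta}$ for $\beta>0$ and $\operatorname{Re} z>0$. Substituting $s=zt$ gives the Gamma integral for real $z>0$, and analytic continuation in $z$ (principal branch) gives the general case.

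\emph{Step 2.} Integrate by parts to get
\[
\mathcal{L}[y'](z)=\bigl[e^{-zt}y(t)\bigr]_0^\infty+z\,\mathcal{L}[y](z)=z\,\mathcal{L}[y](z)-y(0).
\]
The boundary term at infinity vanishes because $y$ has exponential order.

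\emph{Step 3.} By the convolution rule \eqref{Rav702}, justified by Fubini since $g_{1-\rho}\in L^1_{loc}$ and both transforms converge absolutely,
\[
\mathcal{L}[D_t^\rho y](z)=z^{\rho-1}\bigl(z\widehat y(z)-y(0)\bigr)=z^{\rho}\widehat y(z)-z^{\rho-1}y(0).
\]
The endpoint case $\rho=1$ is exactly Step 2, since then $D_t^1 y=y'$ and $z^{0}=1$.

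The only delicate point is the analytic justification: the absolute convergence needed for Fubini in the convolution, and the vanishing of the boundary term. Both follow from the exponential-order hypothesis on $y$ and $y'$. For $H$-valued $y$, as used later in the paper, the same argument works coefficientwise, applying it to $(y(t),v_k)$ for each $k$.
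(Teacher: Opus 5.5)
The paper gives no proof of this lemma; it only quotes it from \cite{AshR102}. Your argument is correct and is the usual derivation of this formula. You write $D_t^\rho y=g_{1-\rho}\ast y'$, use $\mathcal{L}[g_{1-\rho}](z)=z^{\rho-1}$, the convolution rule and $\mathcal{L}[y'](z)=z\widehat y(z)-y(0)$, and for $\rho=1$ the formula reduces to the last identity.

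What your version adds over the paper is an explicit statement of the regularity under which the formula holds. The paper leaves this implicit, even when it applies the lemma to the unknown solution in Lemma~\ref{Rav210}.

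One tidy-up is needed in your hypotheses. Exponential order of $y$ alone only guarantees that $\int_0^\infty e^{-zt}y'(t)\,dt$ converges as an improper integral (through your integration by parts), not absolutely. Your Fubini step in Step~3, however, needs $e^{-\sigma t}y'\in L^1(0,\infty)$. So state the hypothesis at the outset as follows: $y$ is absolutely continuous on each $[0,T]$, and $e^{-\sigma_0 t}y'(t)\in L^1(0,\infty)$ for some $\sigma_0\ge 0$. This already gives $|y(t)|\le Ce^{\sigma_0 t}$. For $\operatorname{Re} z>\sigma_0$ it therefore yields the vanishing boundary term and all the absolute convergence you use. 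Your final paragraph effectively assumes this anyway.
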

\begin{lemma}\label{Rav210}
The Cauchy problem for the Rayleigh--Stokes equation
\begin{equation}\label{Rav211}
\left\{
\begin{array}{ll}
y'(t)+\lambda(1+\gamma D_{t}^{\rho})y(t)=0, \quad \gamma>0, \,\ \lambda>0, \,\ t>0; \\
y(0)=1.
\end{array}
\right.
\end{equation}
has a unique solution
\begin{equation}\label{Rav212}
y(t)= A_{\rho}(\lambda, t)=\frac{\gamma}{\pi} \int\limits_{0}^{\infty}e^{-rt} \frac{\lambda^2 r^{\rho-1}sin \rho \pi}{(-r+\lambda \gamma r^{\rho }cos\rho \pi+\lambda)^{2}+(\lambda\gamma
r^{\rho} sin\rho\pi)^{2}} dr.
\end{equation}
\end{lemma}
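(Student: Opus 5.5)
Apply the Laplace transform to \eqref{Rav211}. Since $\mathcal{L}[y'](z)=z\widehat y(z)-1$ and, by Lemma \ref{Rav201}, $\mathcal{L}[D_t^\rho y](z)=z^\rho\widehat y(z)-z^{\rho-1}$, the transformed equation is
\[
z\widehat y(z)-1+\lambda\widehat y(z)+\lambda\gamma\bigl(z^{\rho}\widehat y(z)-z^{\rho-1}\bigr)=0 .
\]
Hence
\[
\widehat y(z)=\frac{1+\lambda\gamma z^{\rho-1}}{z+\lambda+\lambda\gamma z^{\rho}} .
\]
Before inverting I would check that the denominator $g(z)=z+\lambda\gamma z^\rho+\lambda$ has no zeros in the plane cut along $(-\infty,0]$ (principal branch of $z^\rho$). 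For $\operatorname{Im} z>0$ with $\arg z=\theta\in(0,\pi)$, the imaginary parts of $z$ and $z^\rho$ are both positive. For real $z>0$, $g(z)>0$. So $\widehat y$ is analytic off the cut.

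Next, deform the Bromwich contour into a Hankel contour around the negative real axis and apply \eqref{Rav701}. This needs two estimates: $\widehat y(z)=O(1/|z|)$ on large arcs, so Jordan's lemma kills those contributions, and $\widehat y(z)=O(|z|^{\rho-1})$ near $0$, which is integrable, so the small circle contributes nothing. The main technical step is the computation of $K(r)=-\frac1\pi\operatorname{Im}\widehat y(re^{i\pi})$. Put $z^\rho=r^\rho(\cos\rho\pi+i\sin\rho\pi)$ and $z^{\rho-1}=-r^{\rho-1}(\cos\rho\pi+i\sin\rho\pi)$. Write
\[
P=-r+\lambda\gamma r^\rho\cos\rho\pi+\lambda,\qquad Q=\lambda\gamma r^\rho\sin\rho\pi .
\]
Then the numerator is $N=1-\lambda\gamma r^{\rho-1}e^{i\rho\pi}$ and the denominator is $P+iQ$. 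We have
\[
\operatorname{Im}\frac{N}{P+iQ}=\frac{P\operatorname{Im}N-Q\operatorname{Re}N}{P^2+Q^2}.
\]
Expanding the numerator gives
\[
-\lambda\gamma r^{\rho-1}\sin\rho\pi\,(-r+\lambda\gamma r^\rho\cos\rho\pi+\lambda)-\lambda\gamma r^\rho\sin\rho\pi\,(1-\lambda\gamma r^{\rho-1}\cos\rho\pi).
\]
The $r^{\rho}$ terms cancel, and so do the $r^{2\rho-1}$ terms, leaving $-\lambda^2\gamma r^{\rho-1}\sin\rho\pi$. Therefore
\[
K(r)=\frac{\gamma}{\pi}\,\frac{\lambda^2 r^{\rho-1}\sin\rho\pi}{P^2+Q^2},
\]
which is \eqref{Rav212}. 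I expect this algebraic cancellation, together with the careful justification of the contour deformation, to be the main obstacle.

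Then I would verify directly that the formula defines a solution. The kernel is $O(r^{\rho-1})$ near $0$ and $O(r^{\rho-3})$ at infinity, so the integral converges, $y$ is continuous on $[0,\infty)$ and smooth for $t>0$. To get $y(0)=1$, evaluate $\int_0^\infty K(r)\,dr$. The cleanest route is the initial value theorem, $\lim_{z\to\infty}z\widehat y(z)=1$. Alternatively, since $y$ is the inverse transform of $\widehat y$, it satisfies the equation in the transformed sense, and continuity gives it in the classical sense.

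Uniqueness follows from linearity. The difference $w$ of two solutions solves the problem with $w(0)=0$, so its Laplace transform satisfies $g(z)\widehat w(z)=0$. Since $g\neq0$ for $\operatorname{Re}z>0$, this forces $\widehat w=0$ and hence $w\equiv0$. This requires a priori exponential boundedness of $w$. To avoid assuming it, I would instead rewrite the problem as a Volterra integral equation
\[
y(t)+\lambda\int_0^t y+\frac{\lambda\gamma}{\Gamma(2-\rho)}\int_0^t(t-s)^{1-\rho}y'(s)\,ds\,\cdots
\]
and apply a Gronwall-type argument.
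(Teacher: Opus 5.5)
Your proposal is correct and follows the same route as the paper: the Laplace transform via Lemma~\ref{Rav201}, inversion through the spectral formula \eqref{Rav701}, and the same imaginary-part cancellation producing the kernel in \eqref{Rav212}. Your additional steps supply justification that the paper's proof leaves implicit: the absence of zeros of $z+\lambda+\lambda\gamma z^{\rho}$ off the cut, the large-arc and small-circle estimates, the check $y(0)=1$ via the initial value theorem (as in Lemma~\ref{Rav230}), and the uniqueness argument.
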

\begin{proof} Apply the Laplace transform to equation \eqref{Rav211} to get
\[\mathcal{L}[ y'(t)+\lambda y(t)+\lambda \gamma D_{t}^{\rho}y(t)]=0.\]
By the linearity of the Laplace transform, it follows that:
\[\mathcal{L} [y'](z)+\lambda \mathcal{L} [y] (z)+\lambda \gamma \mathcal{L} [D_{t}^{\rho}y](z)=0.\]
Using Lemma \ref{Rav201}, we obtain:
\[z\mathcal{L} [y] (z)-y(0)+\lambda \mathcal{L} [y] (z)+\lambda \gamma (z^{\rho} \mathcal{L} [y] (z)-z^{\rho-1} y(0))=0.\]
Incorporating the initial condition $y(0)=1$, the equation becomes:
\[z\mathcal{L} [y] (z)-1+\lambda \mathcal{L} [y] (z)+\lambda \gamma z^{\rho} \mathcal{L} [y] (z) - \lambda \gamma z^{\rho-1} =0,\]
and simplifying the above expression yields
\[
\mathcal{L} [y] (z)=
\dfrac{1+\lambda \gamma z^{\rho-1}} {z+\lambda+\lambda \gamma z^{\rho}}.
\]
Taking the inverse Laplace transform of $\mathcal{L} [y] (z)$, we find $y(t)$
\[y(t)=\dfrac{1}{2\pi i} \displaystyle\int_{Br} e^{zt}\dfrac{1+\lambda \gamma z^{\rho-1}} {z+\lambda+\lambda \gamma z^{\rho}}\,dz.\]
Finally, according to the equality \eqref{Rav701}, the spectral density $K(r)$ is given by:
\[
K(r)=
-\dfrac{1}{\pi} Im\left\{\dfrac{1+\lambda \gamma z^{\rho-1}} {z+\lambda+\lambda \gamma z^{\rho}}\Big|_{z=re^{i \pi}}\right\}=
-\dfrac{1}{\pi} Im \left\{\dfrac{1+\lambda \gamma(re^{i \pi})^{\rho-1}}
{re^{i \pi}+\lambda+\lambda \gamma(re^{i \pi})^{\rho}} \right\}
\]
\[
=-\dfrac{1}{\pi} Im \left\{
\dfrac{1+\lambda \gamma r^{\rho-1} (\cos(\rho-1)\pi+i \sin (\rho-1)\pi)}
{-r+\lambda+\lambda \gamma r^{\rho}(\cos \rho \pi+i \sin \rho \pi)}
\right\}
\]
\[
=-\dfrac{1}{\pi} Im \left\{
\dfrac{1-\lambda \gamma r^{\rho-1} \cos \rho \pi-i \lambda \gamma r^{\rho-1}\sin \rho \pi}
{-r+\lambda+\lambda \gamma r^{\rho}\cos\rho \pi+i \lambda \gamma r^{\rho} \sin\rho \pi}
\right\}.
\]
The imaginary part is given by
\[Im \left\{\dfrac{a-ib}{c+id}\right\}=-\dfrac{ad+bc}{c^2+d^2}.\]
Substituting the values, we obtain:
\[
K(r)=\dfrac{1}{\pi} \cdot
\dfrac{(1-\lambda \gamma r^{\rho-1} \cos \rho \pi)(\lambda \gamma r^{\rho} \sin\rho \pi)+(\lambda \gamma r^{\rho-1}\sin \rho \pi)(\lambda-r+\lambda \gamma r^{\rho}\cos\rho \pi)}
{(\lambda-r+\lambda \gamma r^{\rho}\cos\rho \pi)^2+(\lambda \gamma r^{\rho} \sin\rho \pi)^2}
\]
\[
=\dfrac{1}{\pi} \cdot
\dfrac{(\lambda \gamma r^{\rho-1}\sin \rho \pi)(r(1-\lambda \gamma r^{\rho-1} \cos \rho \pi)+\lambda-r+\lambda \gamma r^{\rho}\cos\rho \pi)}
{(\lambda-r+\lambda \gamma r^{\rho}\cos\rho \pi)^2+(\lambda \gamma r^{\rho} \sin\rho \pi)^2}.
\]
Further simplification leads to
\[
K(r)
=\dfrac{\gamma}{\pi}
\dfrac{\lambda^2  r^{\rho-1} \sin \rho \pi}
{(-r+\lambda+\lambda \gamma r^{\rho} \cos\rho \pi)^2+(\lambda \gamma r^{\rho} \sin\rho \pi)^2}.
\]
Consequently, we arrive at the solution given in \eqref{Rav212}.

Thus, the lemma is proved.
\end{proof}

The general solution of the homogeneous and nonhomogeneous Rayleigh--Stokes equations was obtained in \cite{AshR102} (p. 318 and pp. 324--325), where it was expressed in terms of the Wright function. However, this representation is not essential for the purposes of the present work.

\begin{lemma}\label{Rav230}
Let $A_{\rho}(\lambda, t)$ be a solution of the Cauchy problem for the Rayleigh--Stokes equation \eqref{Rav211}. Then
\begin{enumerate}
\item $A_{\rho}(\lambda, 0)=1, \,\ 0<A_{\rho}(\lambda, t)<1, \,\ t>0,$
\item $D_t A_{\rho} (\lambda, t)<0$.
\end{enumerate}
\end{lemma}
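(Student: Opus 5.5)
Everything will be read off the integral representation \eqref{Rav212} from Lemma \ref{Rav210}, which writes $A_\rho(\lambda,t)=\int_0^\infty e^{-rt}K(r)\,dr$ with
\[
K(r)=\frac{\gamma}{\pi}\,\frac{\lambda^2 r^{\rho-1}\sin\rho\pi}{(-r+\lambda\gamma r^{\rho}\cos\rho\pi+\lambda)^2+(\lambda\gamma r^{\rho}\sin\rho\pi)^2}.
\]
The key observation is that $K(r)>0$ for every $r>0$. Indeed, since $0<\rho<1$ we have $\sin\rho\pi>0$, and the denominator is bounded below by $(\lambda\gamma r^\rho\sin\rho\pi)^2>0$. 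Near $r=0$ the kernel behaves like $K(r)\sim \frac{\gamma}{\pi}\, r^{\rho-1}\sin\rho\pi$, which is integrable. Near $r=\infty$ we have $K(r)=O(r^{\rho-3})$, which is also integrable. Hence $K\in L^1(0,\infty)$ with $K>0$.

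\textbf{Step 1: $A_\rho(\lambda,0)=1$.} This is the initial condition in \eqref{Rav211}. To confirm it from the representation itself, we need
\[
\int_0^\infty K(r)\,dr=1 .
\]
I would derive this from the Laplace transform in the proof of Lemma \ref{Rav210}. Write $\widehat y(z)=\frac{1+\lambda\gamma z^{\rho-1}}{z+\lambda+\lambda\gamma z^\rho}$; then $z\widehat y(z)\to1$ as $z\to+\infty$. On the other hand,
\[
z\widehat y(z)=z\int_0^\infty \frac{K(r)}{z+r}\,dr\to\int_0^\infty K(r)\,dr
\]
by monotone convergence, since $K\ge0$ and $\frac{z}{z+r}\uparrow 1$. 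Equivalently, $A_\rho(\lambda,t)\to\int_0^\infty K$ as $t\to0^+$ by monotone convergence. I expect this identification to be the main delicate point. It requires the Bromwich contour to collapse onto the negative real axis, i.e.\ that the denominator $z+\lambda+\lambda\gamma z^\rho$ has no zeros in $\mathbb{C}\setminus(-\infty,0]$. This follows from an argument check: for $|\arg z|<\pi$, the terms $z$ and $\lambda\gamma z^\rho$ lie in the same open half-plane as $z$ (when $\operatorname{Im}z\neq0$), so their imaginary parts have the same sign and cannot cancel. For real $z>0$ all three terms are positive, so again no zero occurs.

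\textbf{Step 2: $0<A_\rho(\lambda,t)<1$ for $t>0$.} For $t>0$ we have $0<e^{-rt}<1$ for all $r>0$. Since $K>0$ and $\int_0^\infty K(r)\,dr=1$, we get
\[
0<\int_0^\infty e^{-rt}K(r)\,dr<\int_0^\infty K(r)\,dr=1 .
\]

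\textbf{Step 3: $D_tA_\rho(\lambda,t)<0$.} For $t>0$ we differentiate under the integral sign, which is justified because
\[
|{-r}e^{-rt}K(r)|\le r e^{-rt_0}K(r)\in L^1(0,\infty)\quad\text{for } t\ge t_0>0 .
\]
This gives
\[
D_tA_\rho(\lambda,t)=-\int_0^\infty re^{-rt}K(r)\,dr<0,
\]
since the integrand is strictly positive.
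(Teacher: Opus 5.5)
Your proposal is correct and follows essentially the paper's route: positivity of the kernel in \eqref{Rav212}, the value at $t=0$ obtained from $\lim_{z\to+\infty} z\widehat{y}(z)=1$, and $D_tA_\rho(\lambda,t)<0$ by differentiating under the integral sign. The only differences are refinements: you get $A_\rho(\lambda,t)<1$ directly from $\int_0^\infty K(r)\,dr=1$ and $e^{-rt}<1$ rather than from monotonicity, and you justify the initial-value theorem (via monotone convergence) and the differentiation under the integral, both of which the paper takes for granted.
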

\begin{proof}
By the property of the Laplace transform (see, for example, \cite{AshR1022}, p.168), one has
\[y(0)=\lim_{z \rightarrow +\infty}z \mathcal{L}y(z),\]
which yields:
\[
y(0)=
\lim_{z \rightarrow +\infty} z\dfrac{1+\lambda \gamma z^{\rho-1}} {z+\lambda+\lambda \gamma z^{\rho}}
=
\displaystyle\lim_{z \rightarrow +\infty} \dfrac{1+\lambda \gamma z^{\rho-1}} {1+\lambda z^{-1}+\lambda \gamma z^{\rho-1}}=1.
\]
Therefore, $y(0)=A_{\rho} (\lambda, 0)=1.$

Let us differentiate $A_{\rho} (\lambda, t)$ with respect to $t$. Then
\[
D_t A_{\rho} (\lambda, t)
=-\frac{\gamma}{\pi}\int\limits_{0}^{\infty} re^{-rt} \frac{\lambda^2 r^{\rho-1}sin \rho \pi}{(-r+\lambda \gamma r^{\rho }cos\rho \pi+\lambda)^{2}+(\lambda\gamma r^{\rho} sin\rho\pi)^{2}}\,dr<0.
\]
Thus, $A_{\rho} (\lambda, t)$ is strictly positive and monotonically decreasing in $t$, satisfying $0<A_{\rho} (\lambda, t) \leq 1$.
\end{proof}

\begin{lemma}\label{Rav361}
The following estimate holds for all $t \in [0, \, T]$ and $k \geq 1$:
\[A_{\rho}(\lambda_k, t) \geq C(\rho, \gamma, \lambda_1)>0,\]
where
\[C(\rho, \gamma, \lambda_1)
=\dfrac{\gamma \sin \rho \pi}{3 \pi}\displaystyle\int\limits_{0}^{\infty}
\dfrac{r^{\rho-1} e^{-rT}}{\dfrac{r^2}{\lambda_1^2}+\gamma^2 r^{2 \rho}+1}\,dr.
\]
\end{lemma}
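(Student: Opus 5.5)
We work directly from the integral representation \eqref{Rav212} of Lemma \ref{Rav210}. Since the integrand is nonnegative and $e^{-rt}\ge e^{-rT}$ for $t\in[0,T]$, we have
\[
A_{\rho}(\lambda_k,t)\ \ge\ \frac{\gamma\sin\rho\pi}{\pi}\int\limits_0^\infty e^{-rT}\frac{\lambda_k^2 r^{\rho-1}}{(-r+\lambda_k\gamma r^{\rho}\cos\rho\pi+\lambda_k)^2+(\lambda_k\gamma r^{\rho}\sin\rho\pi)^2}\,dr .
\]
(Alternatively, one can use the monotonicity from Lemma \ref{Rav230} to reduce to $t=T$; the outcome is the same.) The first step is to dominate the denominator. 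Dividing numerator and denominator by $\lambda_k^2$, the denominator becomes $\bigl(-\tfrac{r}{\lambda_k}+\gamma r^\rho\cos\rho\pi+1\bigr)^2+(\gamma r^\rho\sin\rho\pi)^2$.

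Next we expand this expression. Since $(a+b+c)^2\le 3(a^2+b^2+c^2)$, we get
\[
\Bigl(-\tfrac{r}{\lambda_k}+\gamma r^\rho\cos\rho\pi+1\Bigr)^2\le 3\Bigl(\tfrac{r^2}{\lambda_k^2}+\gamma^2r^{2\rho}\cos^2\rho\pi+1\Bigr).
\]
Adding $\gamma^2r^{2\rho}\sin^2\rho\pi\le 3\gamma^2 r^{2\rho}\sin^2\rho\pi$ gives the bound
\[
\text{denominator}\le 3\Bigl(\tfrac{r^2}{\lambda_k^2}+\gamma^2r^{2\rho}+1\Bigr).
\]
Finally, $\lambda_k\ge\lambda_1$ gives $r^2/\lambda_k^2\le r^2/\lambda_1^2$. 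Hence the denominator is at most $3\bigl(r^2/\lambda_1^2+\gamma^2r^{2\rho}+1\bigr)$, uniformly in $k$.

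Substituting this bound into the integral produces exactly $C(\rho,\gamma,\lambda_1)$, and the argument is then complete. It remains to check that the constant is finite and positive. Positivity follows because $\sin\rho\pi>0$ for $\rho\in(0,1)$ and the integrand is positive. Finiteness holds because the integrand behaves like $r^{\rho-1}$ near $0$, which is integrable since $\rho>0$, and decays exponentially at infinity.

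The only real subtlety is keeping the estimate uniform in $k$. The cross term $\cos\rho\pi$ has indefinite sign, so the naive expansion of the square does not work directly. The crude inequality $(a+b+c)^2\le3(a^2+b^2+c^2)$ avoids this sign issue, which is where the factor $3$ in the constant comes from. For $t=0$ the claim is trivial by Lemma \ref{Rav230}; it is also covered by the same bound, since $e^{0}\ge e^{-rT}$.
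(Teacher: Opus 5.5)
Your proof is correct and is essentially the paper's argument. Both bound the denominator in \eqref{Rav212} by $3\lambda_k^2\bigl(r^2/\lambda_1^2+\gamma^2 r^{2\rho}+1\bigr)$ using $(a+b+c)^2\le 3(a^2+b^2+c^2)$ and $\lambda_k\ge\lambda_1$, then replace $e^{-rt}$ by $e^{-rT}$. You also state explicitly the step $e^{-rt}\ge e^{-rT}$ and the strict positivity of the constant, which the paper leaves implicit (it checks only finiteness).
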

\begin{proof}
Let us find a lower bound for $A_{\rho}(\lambda_k, t)$. To this end, we estimate $A_{\rho}(\lambda_k, t)$ from below by bounding the denominator from above. Since  $(a+b+c)^2 \leq 3(a^2+b^2+c^2)$, we obtain
\[
(-r+\lambda_k+\lambda \gamma r^{\rho} \cos\rho \pi)^2+(\lambda \gamma r^{\rho} \sin\rho \pi)^2
\]
\[
\leq 3(r^2+\lambda_k^2+(\lambda_k \gamma r^{\rho} \cos\rho \pi)^2)+(\lambda_k \gamma r^{\rho} \sin\rho \pi)^2
\leq 3\lambda_k^2 \left(\dfrac{r^2}{\lambda_1^2}+\gamma^2 r^{2 \rho}+1\right).
\]
Substituting this estimate into the expression for $A_{\rho}(\lambda_k, t)$, we arrive at
\[
A_{\rho}(\lambda_k, t) \geq \dfrac{\gamma \sin \rho \pi}{3 \pi} \displaystyle\int\limits_{0}^{\infty}
\dfrac{r^{\rho-1} e^{-rT}}{\dfrac{r^2}{\lambda_1^2}+\gamma^2 r^{2 \rho}+1}\,dr.
\]
It remains to verify the convergence of this improper integral. Since the denominator is bounded from below by $1$, we have
\[
\displaystyle\int\limits_{0}^{\infty}
\dfrac{r^{\rho-1} e^{-rT}}{\dfrac{r^2}{\lambda_1^2}+\gamma^2 r^{2 \rho}+1}\,dr
\leq
\displaystyle\int\limits_{0}^{\infty} r^{\rho-1} e^{-rT}\,dr.
\]
By changing variables $\tau=rT$, the latter integral becomes
\[
T^{-\rho} \displaystyle\int\limits_{0}^{\infty} \tau^{\rho-1} e^{-\tau} d \tau=\dfrac{\Gamma(\rho)}{T^{\rho}}.
\]
\end{proof}

\begin{lemma}\label{Rav240}
The Cauchy problem for the Rayleigh--Stokes equation
\begin{equation}\label{Rav250}
\left\{
\begin{array}{ll}
y'(t)+\lambda(1+\gamma D_{t}^{\rho})y(t)=f(t), \quad \gamma>0, \,\ \lambda>0, \,\ t>0; \\
y(0)=y_0,
\end{array}
\right.
\end{equation}
has a unique solution
\begin{equation}\label{Rav251}
y(t)=y_0 A_{\rho}(\lambda, t)+\int\limits_0^t B_{\rho} (\lambda, t-\tau) f(\tau)\,d \tau,
\end{equation}
where
$B_{\rho}(\lambda, t)$ is given as (see \cite{AshR96}):
\begin{equation}\label{Rav252}
B_{\rho}(\lambda, t)
=\frac{\gamma}{\pi}\int\limits_{0}^{\infty}e^{-rt}\frac{\lambda r^{\rho} \sin \rho \pi}
{(-r+\lambda \gamma r^{\rho} \cos\rho \pi+\lambda)^{2}+(\lambda\gamma r^{\rho} sin\rho\pi)^{2}} \,dr.
\end{equation}
\end{lemma}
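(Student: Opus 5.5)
We follow the scheme of Lemma \ref{Rav210}: Laplace transform, solve the resulting algebraic equation, and invert via the representation \eqref{Rav701} together with the convolution rule \eqref{Rav702}. Applying $\mathcal{L}$ to \eqref{Rav250} and using Lemma \ref{Rav201} with $y(0)=y_0$ gives
\[
z\widehat{y}(z)-y_0+\lambda\widehat{y}(z)+\lambda\gamma\bigl(z^{\rho}\widehat{y}(z)-z^{\rho-1}y_0\bigr)=\widehat{f}(z),
\]
hence
\[
\widehat{y}(z)=y_0\,\frac{1+\lambda\gamma z^{\rho-1}}{z+\lambda+\lambda\gamma z^{\rho}}+\frac{1}{z+\lambda+\lambda\gamma z^{\rho}}\,\widehat{f}(z).
\]
The inverse transform of the first term is $y_0A_{\rho}(\lambda,t)$ by Lemma \ref{Rav210}. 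For the second term we use \eqref{Rav702}. It therefore remains to identify $B_\rho(\lambda,t):=\mathcal{L}^{-1}\bigl[(z+\lambda+\lambda\gamma z^\rho)^{-1}\bigr](t)$ with \eqref{Rav252}.

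To do this, we compute the density in \eqref{Rav701}. At $z=re^{i\pi}$ the denominator equals $c+id$ with $c=-r+\lambda+\lambda\gamma r^\rho\cos\rho\pi$ and $d=\lambda\gamma r^\rho\sin\rho\pi$. Since $\operatorname{Im}\{1/(c+id)\}=-d/(c^2+d^2)$, we get
\[
K(r)=\frac{1}{\pi}\,\frac{\lambda\gamma r^{\rho}\sin\rho\pi}{c^2+d^2},
\]
which is exactly the integrand of \eqref{Rav252}. Before this step we must justify the deformation of the Bromwich contour onto the negative real axis. The function $z+\lambda+\lambda\gamma z^\rho$ must have no zeros in the cut plane $|\arg z|<\pi$. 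Indeed, for $\operatorname{Im}z>0$ and $0<\arg z<\pi$, both $z$ and $z^\rho$ have positive imaginary parts, so the sum cannot vanish. The conjugate case is symmetric, and for $z>0$ every term is positive. On the cut itself $d\neq0$ for $r>0$. In addition, $|\widehat{\cdot}|\to0$ on large arcs and the integrand is integrable near $0$, since $K(r)\sim r^{\rho}/\lambda$ there. This is the same Titchmarsh-type argument that was implicitly used for $A_\rho$.

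Next we verify that \eqref{Rav251} is actually a solution, not merely a formal one. Note that $K\ge0$ and $K(r)=O(r^{\rho-2})$ as $r\to\infty$, so $B_\rho(\lambda,\cdot)$ is integrable near $0$. In fact $B_\rho(\lambda,0^+)$ is finite because $\int^\infty r^{\rho-2}\,dr<\infty$. Hence the convolution $\int_0^tB_\rho(\lambda,t-\tau)f(\tau)\,d\tau$ is continuous and vanishes at $t=0$, which gives $y(0)=y_0$. Differentiability of the convolution follows from $B_\rho$ being bounded and $C^1$ for $t>0$, provided $f$ is continuous.

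Finally, uniqueness follows by taking the difference $w$ of two solutions. It satisfies the homogeneous equation with $w(0)=0$, so its Laplace transform satisfies $(z+\lambda+\lambda\gamma z^\rho)\widehat{w}=0$. Since the factor has no zeros for $\operatorname{Re}z>0$, we get $\widehat{w}\equiv0$, and hence $w\equiv0$ by injectivity of the Laplace transform. Because the equation is linear with $C^1$-type regularity, $w$ has at most exponential growth, so the transform is defined; alternatively, one can restrict to $[0,T]$ and argue via a Volterra integral equation.

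The main obstacle is the rigorous justification of the contour inversion formula \eqref{Rav701} for $B_\rho$, namely the absence of zeros and the control of the small and large arcs. The remaining steps are direct computation.
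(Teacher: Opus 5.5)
Your proposal is correct and follows the paper's route: Laplace transform with Lemma~\ref{Rav201}, inversion of the first term via Lemma~\ref{Rav210}, and the convolution rule \eqref{Rav702} for the second term. The one real difference is that you derive $B_\rho$ yourself from the density formula \eqref{Rav701}, whereas the paper cites \cite{AshR96} for $\mathcal{L}^{-1}[(z+\lambda+\lambda\gamma z^{\rho})^{-1}]=B_\rho(\lambda,t)$; your $K(r)=\frac{1}{\pi}\frac{\lambda\gamma r^{\rho}\sin\rho\pi}{c^2+d^2}$ is right, and your contour, regularity and uniqueness arguments are extras the paper does not give.

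One repair is needed in those extras. Being bounded and $C^1$ for $t>0$ does not by itself make $B_\rho*f$ differentiable for merely continuous $f$. However, since $K\ge 0$, the function $B_\rho(\lambda,\cdot)$ is monotone, so $\partial_t B_\rho\in L^1(0,T)$, and then $(B_\rho*f)'(t)=B_\rho(\lambda,0)f(t)+\int_0^t\partial_tB_\rho(\lambda,t-\tau)f(\tau)\,d\tau$ exists and is continuous.
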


\begin{proof}
Let us apply the Laplace transform to the equation \eqref{Rav250}:
\[\mathcal{L} [y'(t)+\lambda y(t)+\lambda \gamma D_{t}^{\rho}y(t)]=\mathcal{L} [f(t)].\]
By the linearity property of the Laplace transform, it follows that
\[\mathcal{L} [y'](z)+\lambda \mathcal{L} [y] (z)+\lambda \gamma \mathcal{L} [D_{t}^{\rho}y(t)](z)=\mathcal{L} [f](z).\]
Using Lemma \ref{Rav201}, we obtain
\[z\mathcal{L} [y] (z)-y(0)+\lambda \mathcal{L} [y] (z)+\lambda \gamma (z^{\rho} \mathcal{L} [y] (z)-z^{\rho-1} y(0))=\mathcal{L} [f](z).\]
Thus, the equation takes the following form:
\[z\mathcal{L} [y] (z)-y_0+\lambda \mathcal{L} [y] (z)+\lambda \gamma z^{\rho} \mathcal{L} [y] (z)- \lambda \gamma z^{\rho-1} y_0=\mathcal{L} [f](z),\]
or
\[\mathcal{L} [y] (z)=
y_0 \dfrac{1+\lambda \gamma z^{\rho-1}} {z+\lambda+\lambda \gamma z^{\rho}}
+\dfrac{\mathcal{L} [f](z)}{z+\lambda+\lambda \gamma z^{\rho}}.
\]
Applying the inverse Laplace transform to this expression, we obtain (refer to formula \eqref{Rav212})
\[
\mathcal{L}^{-1}\left[\dfrac{1+\lambda \gamma z^{\rho-1}} {z+\lambda+\lambda \gamma z^{\rho}}\right]
=A_{\rho} (\lambda, t).\]

The inverse Laplace transform of the second term is provided in \cite{AshR96}, namely,
\[
\mathcal{L}^{-1} \left[\dfrac{1}{z+\lambda+\lambda \gamma z^{\rho}}\right]
=B_{\rho}(\lambda, t).
\]
According to the convolution \eqref{Rav702} for the Laplace transform, we have
\[
\mathcal{L}^{-1} \left[\dfrac{\mathcal{L}[f](z)}{z+\lambda+\lambda \gamma z^{\rho}}\right]
=\int\limits_0^t B_{\rho} (\lambda, t-\tau) f(\tau)\,d \tau.
\]
As a result, for the solution to \eqref{Rav250}, we obtain \eqref{Rav251}.

Thus, the lemma is proved.
\end{proof}

The solution of problem \eqref{Rav250} in a particular case for $\lambda = 1$ and $\gamma = a$ (where $a$ is a positive constant) was obtained in \cite{AshR1023} (pp. 253--255). 

\begin{lemma}\label{Rav260} (see \cite{AshR96})
Let $B_{\rho} (\lambda, t)$ be the solution of the Cauchy problem for the Rayleigh--Stokes equation \eqref{Rav250}. Then
\begin{enumerate}
\item $B_{\rho} (\lambda, 0)=1, \,\ 0<B_{\rho} (\lambda, t)<1, \,\ t>0,$
\item $\lambda B_{\rho} (\lambda, t)< C \min\{t^{-1}, \, t^{\rho-1}\}, \,\ t>0,$
\item $\displaystyle\int\limits_0^T B_{\rho} (\lambda, t)\,dt < \dfrac{1}{\lambda}, \,\ T>0.$
\end{enumerate}
\end{lemma}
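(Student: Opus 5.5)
Since this lemma is quoted from \cite{AshR96}, I would re-derive it directly from the integral representation \eqref{Rav252} and the Laplace image $\mathcal{L}[B_\rho(\lambda,\cdot)](z)=(z+\lambda+\lambda\gamma z^\rho)^{-1}$ obtained in the proof of Lemma \ref{Rav240}. Write
\[
D(r)=(-r+\lambda+\lambda\gamma r^\rho\cos\rho\pi)^2+(\lambda\gamma r^\rho\sin\rho\pi)^2 .
\]
The integrand of \eqref{Rav252} is strictly positive for $r>0$, because $\sin\rho\pi>0$ for $\rho\in(0,1)$. Hence $B_\rho(\lambda,t)>0$, and differentiating under the integral gives $D_tB_\rho<0$, exactly as in Lemma \ref{Rav230}. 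For the value at zero I would use the initial value theorem from the proof of Lemma \ref{Rav230}:
\[
B_\rho(\lambda,0)=\lim_{z\to+\infty} z\,(z+\lambda+\lambda\gamma z^\rho)^{-1}=1,
\]
since $\rho<1$. Strict monotonicity then yields $0<B_\rho(\lambda,t)<1$ for $t>0$, which is item 1.

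For item 3, positivity and Fubini give
\[
\int_0^T B_\rho\,dt<\int_0^\infty B_\rho\,dt=\lim_{z\to0^+}\mathcal{L}[B_\rho](z)=\frac{1}{\lambda}.
\]
Equivalently, one integrates $e^{-rt}$ in $t$ and checks that $\frac{\gamma}{\pi}\int_0^\infty \lambda r^{\rho-1}\sin\rho\pi\,D(r)^{-1}\,dr=1/\lambda$. Either way, the integral over $(0,\infty)$ is finite because the integrand is of order $r^{\rho-1}$ near $0$ and of order $r^{\rho-3}$ at infinity.

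Item 2 is the main estimate, and I split it into the two bounds.

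\emph{The bound $t^{\rho-1}$.} Drop the first square in $D$, so $D\ge(\lambda\gamma r^\rho\sin\rho\pi)^2$. This gives
\[
\lambda B_\rho\le \frac{1}{\pi\gamma\sin\rho\pi}\int_0^\infty e^{-rt}r^{-\rho}\,dr=\frac{\Gamma(1-\rho)}{\pi\gamma\sin\rho\pi}\,t^{\rho-1}.
\]

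\emph{The bound $t^{-1}$.} Use $\int_0^\infty e^{-rt}g(r)\,dr\le t^{-1}\sup_r g(r)$ with $g(r)=\frac{\gamma}{\pi}\lambda^2 r^\rho\sin\rho\pi/D(r)$. It then suffices to show that $g$ is bounded uniformly in $\lambda\ge\lambda_1$, and this is the delicate step. I would split the $r$-axis into two regions.
\begin{itemize}
\item For $r\le r_0:=\min\{\lambda_1/4,(4\gamma)^{-1/\rho}\}$, we have $r\le\lambda/4$ and $\lambda\gamma r^\rho\le\lambda/4$, so the real part satisfies $\lambda-r+\lambda\gamma r^\rho\cos\rho\pi\ge\lambda/2$. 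Hence $D\ge\lambda^2/4$ and $g\le C r_0^\rho$.
\item For $r\ge r_0$, the imaginary-part bound gives $g\le C r^{-\rho}\le C r_0^{-\rho}$.
\end{itemize}
The resulting constant depends only on $\rho$, $\gamma$ and $\lambda_1$, which is admissible here because the lemma is applied with $\lambda=\lambda_k\ge\lambda_1$. Combining the two bounds gives $\lambda B_\rho(\lambda,t)\le C\min\{t^{-1},t^{\rho-1}\}$.
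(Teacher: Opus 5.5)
The paper does not prove this lemma; it imports it from \cite{AshR96}. Your derivation from the kernel in \eqref{Rav252} and the Laplace image $(z+\lambda+\lambda\gamma z^{\rho})^{-1}$ is therefore a genuine addition, and it is correct. Positivity and strict decrease of $B_\rho$, the initial-value limit $B_\rho(\lambda,0)=1$, and $\int_0^\infty B_\rho\,dt=\lambda^{-1}$ all hold for every $\lambda>0$. The bound $\lambda B_\rho(\lambda,t)\le \Gamma(1-\rho)(\pi\gamma\sin\rho\pi)^{-1}t^{\rho-1}$ holds with a $\lambda$-independent constant. This $t^{\rho-1}$ branch is the only part of item 2 the paper uses later.

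The weak spot is the $t^{-1}$ branch. Your $\sup_r g$ argument gives a constant depending on $\lambda_1$, and this dependence cannot be removed within that method. At a zero $r^*$ of $\lambda-r+\lambda\gamma r^{\rho}\cos\rho\pi$ one has $g(r^*)=(r^*)^{-\rho}/(\pi\gamma\sin\rho\pi)$, and $r^*\to0$ as $\lambda\to0$, so $\sup_r g$ is unbounded over $\lambda>0$.

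Since \eqref{Rav250} allows every $\lambda>0$, it is better to get this branch directly from items 1 and 3, which you have already proved. Because $B_\rho(\lambda,\cdot)$ is positive and decreasing,
\[
t\,B_\rho(\lambda,t)\le\int_0^t B_\rho(\lambda,s)\,ds<\int_0^\infty B_\rho(\lambda,s)\,ds=\frac{1}{\lambda},
\]
that is, $\lambda B_\rho(\lambda,t)<t^{-1}$ for all $\lambda>0$ and $t>0$, with $C=1$ and no splitting of the $r$-axis. Your splitting is valid but only covers $\lambda\ge\lambda_1$. That range suffices for the paper's applications, where $\lambda=\lambda_k$, but the one-line argument above gives the lemma with a constant independent of $\lambda$.
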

\begin{lemma}\label{Rav280} The following equalities hold for $A_{\rho}(\lambda, t)$ and $B_{\rho} (\lambda, t)$
\begin{equation}\label{Rav2520}
D_t A_{\rho}(\lambda, t)=-\lambda B_{\rho} (\lambda, t),
\end{equation}
and
\begin{equation}\label{Rav2510}
A_{\rho}(\lambda, t)
=1-\lambda \displaystyle\int\limits_0^t B_{\rho} (\lambda, \tau)\,d \tau.
\end{equation}
\end{lemma}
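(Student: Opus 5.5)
The plan is to work on the Laplace side, where both identities reduce to elementary algebra with the two transforms found in Lemmas \ref{Rav210} and \ref{Rav240}. Set
\[
\widehat{A}(z)=\mathcal{L}[A_\rho(\lambda,\cdot)](z)=\frac{1+\lambda\gamma z^{\rho-1}}{z+\lambda+\lambda\gamma z^{\rho}},\qquad
\widehat{B}(z)=\mathcal{L}[B_\rho(\lambda,\cdot)](z)=\frac{1}{z+\lambda+\lambda\gamma z^{\rho}}.
\]
Since $A_\rho(\lambda,0)=1$ by Lemma \ref{Rav230}, the derivative rule gives
\[
\mathcal{L}[D_tA_\rho](z)=z\widehat{A}(z)-1.
\]
We then compute
\[
z\widehat{A}(z)-1=\frac{z+\lambda\gamma z^{\rho}-z-\lambda-\lambda\gamma z^{\rho}}{z+\lambda+\lambda\gamma z^{\rho}}=-\lambda\widehat{B}(z).
\]
By uniqueness of the inverse Laplace transform for continuous functions on $(0,\infty)$, this yields \eqref{Rav2520}.

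As a cross-check, and as the route I would actually write if the transform argument needs justification of $\mathcal{L}[D_tA_\rho]$, I would verify \eqref{Rav2520} directly from the integral representations \eqref{Rav212} and \eqref{Rav252}. Differentiating \eqref{Rav212} under the integral sign multiplies the integrand by $-r$, exactly as in the proof of Lemma \ref{Rav230}. Since $r\cdot\lambda^2r^{\rho-1}=\lambda\cdot\lambda r^{\rho}$, the result is precisely $-\lambda$ times the integrand of \eqref{Rav252}, and the denominators coincide. Differentiation under the integral is legitimate for $t>0$: the denominator behaves like $\lambda^2$ near $r=0$ and like $r^2$ at infinity, so the integrand times $r$ is dominated by $C r^{\rho}e^{-rt_0}$ on $t\ge t_0>0$, which is integrable.

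Identity \eqref{Rav2510} then follows by integrating \eqref{Rav2520} from $0$ to $t$ and using $A_\rho(\lambda,0)=1$:
\[
A_\rho(\lambda,t)-1=-\lambda\int_0^tB_\rho(\lambda,\tau)\,d\tau .
\]
The only obstacle is the behaviour at $t=0$. Here $D_tA_\rho$ may blow up like $t^{\rho-1}$, but Lemma \ref{Rav260} gives $0<B_\rho<1$ (or $\lambda B_\rho\le Ct^{\rho-1}$), so $B_\rho(\lambda,\cdot)$ is integrable near $0$. I would integrate over $[\varepsilon,t]$ and let $\varepsilon\to0$, using continuity of $A_\rho(\lambda,\cdot)$ at $0$. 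That continuity holds by monotone convergence in \eqref{Rav212}, with the value at $0$ equal to $1$ by Lemma \ref{Rav230}. An alternative is the Laplace-side check that $\mathcal{L}[1-\lambda\int_0^tB_\rho]=z^{-1}-\lambda z^{-1}\widehat{B}(z)=\widehat{A}(z)$, which is the same algebra as above.
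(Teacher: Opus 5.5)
Your proposal is correct. The argument you label a cross-check is precisely the paper's proof. The paper differentiates \eqref{Rav212} under the integral sign and notes that the extra factor $-r$ turns the numerator $\lambda^2 r^{\rho-1}$ into $-\lambda\cdot\lambda r^{\rho}$. It reads off $-\lambda B_\rho(\lambda,t)$ from \eqref{Rav252}, then integrates from $0$ to $t$ using $A_\rho(\lambda,0)=1$. It does not justify either the interchange or the passage to $t=0$, and your domination and $\varepsilon\to 0$ arguments fill exactly those holes. For the domination, say explicitly that the denominator has a positive minimum on $[0,\infty)$. This holds because $(\lambda\gamma r^{\rho}\sin\rho\pi)^2>0$ for $r>0$; its behaviour at the two ends alone does not exclude an interior zero.

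Your primary, Laplace-side route is a genuinely different argument, and the algebra $z\widehat{A}-1=-\lambda\widehat{B}$, $z^{-1}-\lambda z^{-1}\widehat{B}=\widehat{A}$ is right. It bypasses the integrands entirely, but it presupposes that \eqref{Rav212} and \eqref{Rav252} really are the inverse transforms of $\widehat{A}$ and $\widehat{B}$. Also, as you suspected, the derivative rule for $\mathcal{L}[D_tA_\rho]$ needs absolute continuity of $A_\rho(\lambda,\cdot)$ on $[0,T]$, which in practice comes from the direct computation.

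You can remove that dependence by reversing the order. The identity $\mathcal{L}\bigl[1-\lambda\int_0^t B_\rho(\lambda,\tau)\,d\tau\bigr]=\widehat{A}$, together with Lerch's uniqueness theorem, gives \eqref{Rav2510} for every $t>0$; both sides are continuous on $(0,\infty)$ and of exponential order. This also yields $A_\rho(\lambda,0^+)=1$ without the initial-value theorem. Then \eqref{Rav2520} follows from the fundamental theorem of calculus, since $B_\rho(\lambda,\cdot)$ is continuous on $(0,\infty)$. The paper's direct route, by contrast, is self-contained once the explicit formulas are accepted and needs nothing beyond dominated convergence.
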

\begin{proof} 
Differentiating $A_{\rho}(\lambda, t)$ with respect to $t$ yields (see \eqref{Rav252})
\[
D_t A_{\rho}(\lambda, t)=
-\frac{\gamma}{\pi}\int\limits_{0}^{\infty} re^{-rt} \frac{\lambda^2 r^{\rho-1}sin \rho \pi}{(-r+\lambda \gamma r^{\rho }cos\rho \pi+\lambda)^{2}+(\lambda\gamma r^{\rho} sin\rho\pi)^{2}}\,dr=-\lambda B_{\rho} (\lambda, t).
\]
Integrating this equality from $0$ to $t$ and using the fact that $A_\rho(\lambda,0)=1$, we arrive at \eqref{Rav2510}.
\end{proof}

Note that the function $A_{\rho}(\lambda, t)$ is also discussed in \cite{AshR96}.

\begin{lemma}\label{Rav580} (see \cite{AshR270})
The following estimate holds for all $t \in [0, \, T]$ and $k \geq 1$:
\[B_{\rho}(\lambda_k, t) \geq \dfrac{C(\rho, \gamma, \lambda_1)}{\lambda_k},\]
where
\[C(\rho, \gamma, \lambda_1)
=\dfrac{\gamma \sin \rho \pi}{4}\displaystyle\int\limits_{0}^{\infty}
\dfrac{r^{\rho} e^{-rT}}{\dfrac{r^2}{\lambda_1^2}+\gamma^2 r^{2 \rho}+1}\,dr.
\]
\end{lemma}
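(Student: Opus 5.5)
The plan is to compare integrands pointwise. Starting from \eqref{Rav252} with $\lambda=\lambda_k$, write
\[
B_{\rho}(\lambda_k,t)=\frac{\gamma\sin\rho\pi}{\pi}\int_0^\infty e^{-rt}\,\frac{\lambda_k r^{\rho}}{D_k(r)}\,dr,
\]
where
\[
D_k(r)=(\lambda_k-r+\lambda_k\gamma r^{\rho}\cos\rho\pi)^2+(\lambda_k\gamma r^{\rho}\sin\rho\pi)^2 .
\]
Set
\[
Q(r)=\frac{r^2}{\lambda_1^2}+\gamma^2r^{2\rho}+1 .
\]
Since $0<\rho<1$, we have $\sin\rho\pi>0$, so every integrand is positive and lower bounds can be taken under the integral.

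\textbf{Step 1 (time factor).} For $t\in[0,T]$ and $r\ge0$ we have $e^{-rt}\ge e^{-rT}$. This removes the $t$-dependence and produces the factor $e^{-rT}$ that appears in $C(\rho,\gamma,\lambda_1)$.

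\textbf{Step 2 (denominator).} The aim is an upper bound of the form $D_k(r)\le K\lambda_k^2Q(r)$ with $K$ independent of $k$ and $r$. Then
\[
\frac{\lambda_k r^\rho}{D_k(r)}\ge \frac{r^\rho}{K\lambda_k Q(r)},
\]
and the $1/\lambda_k$ in the statement appears.

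To get $K$, group $a=\lambda_k-r+\lambda_k\gamma r^\rho\cos\rho\pi$ and $b=\lambda_k\gamma r^\rho\sin\rho\pi$. Using $(x+y)^2\le2x^2+2y^2$ we get
\[
a^2\le 2(\lambda_k-r)^2+2\lambda_k^2\gamma^2r^{2\rho}\cos^2\rho\pi .
\]
Next, $(\lambda_k-r)^2\le\lambda_k^2+r^2$ because $\lambda_k,r\ge0$. Adding $b^2$ and using $\cos^2+\sin^2=1$ gives
\[
D_k(r)\le 2\lambda_k^2+2r^2+2\lambda_k^2\gamma^2r^{2\rho}.
\]
Finally, $r^2=\lambda_k^2\,(r^2/\lambda_k^2)\le\lambda_k^2\,(r^2/\lambda_1^2)$ since $\lambda_k\ge\lambda_1$. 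Hence $D_k(r)\le 2\lambda_k^2Q(r)$, so $K=2$. Substituting into the integral yields
\[
B_\rho(\lambda_k,t)\ge\frac{\gamma\sin\rho\pi}{2\pi\lambda_k}\int_0^\infty\frac{r^\rho e^{-rT}}{Q(r)}\,dr .
\]
Convergence of this integral is immediate: $Q\ge1$ and $\int_0^\infty r^\rho e^{-rT}\,dr=\Gamma(1+\rho)T^{-1-\rho}$.

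\textbf{Main obstacle.} The stated constant has the prefactor $\tfrac{\gamma\sin\rho\pi}{4}$ rather than $\tfrac{\gamma\sin\rho\pi}{2\pi}$. Reaching it along this route would require $D_k(r)\le\frac4\pi\lambda_k^2Q(r)$. This cannot hold pointwise. Take $\rho$ small and $r\to0$ with $\gamma r^\rho=1$, and let $\lambda_k$ be large. Then $D_k\approx\lambda_k^2(1+\gamma r^\rho)^2=4\lambda_k^2$, while $\lambda_k^2Q\approx2\lambda_k^2$, so the ratio is $2>4/\pi$.

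To obtain the stated constant I would therefore first try to use the integral rather than the integrand. One option is to split into $\rho\le\frac12$ and $\rho>\frac12$. When $\rho>\frac12$ we have $\cos\rho\pi<0$, and the cross term $2(\lambda_k-r)\lambda_k\gamma r^\rho\cos\rho\pi$ is nonpositive for $r\le\lambda_k$. Discarding it there gives $D_k\le\lambda_k^2Q$ on that range, which beats the target. The remaining range $r>\lambda_k$, and the small-$\rho$ case, would be handled by comparison with the weight $e^{-rT}$, which makes the bad region contribute little to the integral.

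If this refinement does not close the gap, the argument above still proves the lemma in the form $B_\rho(\lambda_k,t)\ge C'/\lambda_k$, with $C'=\frac{2}{\pi}C(\rho,\gamma,\lambda_1)$ replacing $C(\rho,\gamma,\lambda_1)$. This is exactly what is needed in later applications. For the precise constant $\tfrac14$ I would then defer to \cite{AshR270}.
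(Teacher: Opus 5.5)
Your pointwise comparison is correct. The bound $e^{-rt}\ge e^{-rT}$ together with $D_k(r)\le 2\lambda_k^2 Q(r)$ gives
\[
B_\rho(\lambda_k,t)\ge \frac{\gamma\sin\rho\pi}{2\pi\lambda_k}\int_0^\infty\frac{r^\rho e^{-rT}}{Q(r)}\,dr,\qquad t\in[0,T].
\]
This is the same denominator-bounding device the paper uses for $A_\rho$ in Lemma \ref{Rav361}, where $(a+b+c)^2\le 3(a^2+b^2+c^2)$ produces $\frac{1}{3\pi}$. The paper gives no proof of this lemma, only the citation. The gap is the prefactor $\frac14$, and your fallback plan there fails. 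No refinement at the level of the integral can reach it, and deferring to the reference does not help, because the inequality as printed is false.

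To see this, take $\gamma=T=t=1$ and any $A$ with $\lambda_1\ge1$ (e.g.\ the Dirichlet $-\frac{d^2}{dx^2}$ on $(0,\pi)$), and write $x=r^\rho$. Up to the prefactor, the integrand of $\lambda_kB_\rho$ is bounded by $e^{-r}r^{-\rho}/\sin^2\rho\pi$, so dominated convergence applies. Together with $1+x^2\ge 2x$ it gives
\[
\lim_{k\to\infty}\lambda_kB_\rho(\lambda_k,1)=\frac{\sin\rho\pi}{\pi}\int_0^\infty\frac{x\,e^{-r}}{1+2x\cos\rho\pi+x^2}\,dr\le\frac{\sin\rho\pi}{2\pi(1+\cos\rho\pi)}.
\]
Here $\frac{1}{2\pi(1+\cos\rho\pi)}\to\frac{1}{4\pi}\approx0.0796$ as $\rho\to0^+$. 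On the other hand,
\[
\frac{C(\rho,1,\lambda_1)}{\sin\rho\pi}\ge\frac14\int_0^\infty\frac{r^\rho e^{-r}}{r^2+r^{2\rho}+1}\,dr\longrightarrow\frac14\int_0^\infty\frac{e^{-r}}{r^2+2}\,dr\approx0.089 .
\]
So for small $\rho$ and large $k$ one has $\lambda_kB_\rho(\lambda_k,1)<C(\rho,1,\lambda_1)$.

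This is your own pointwise obstruction made global. For small $\rho$ the factor $\gamma r^\rho$ is nearly constant on the bulk of the weight $e^{-rT}$. The region where $D_k\approx2\lambda_k^2Q$ therefore carries essentially all of the mass, not a small part of it.

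What you have proved is the correct form of the lemma. It also implies the most likely intended constant $\frac{\gamma\sin\rho\pi}{4\pi}$, which looks like the printed one with a dropped $\pi$ (cf.\ Lemma \ref{Rav361}). Corollary \ref{Rav990} and Theorem \ref{Rav103} use only that the constant is positive and independent of $k$ and $t$. You should therefore state the lemma with your constant $\frac{\gamma\sin\rho\pi}{2\pi}\int_0^\infty\frac{r^\rho e^{-rT}}{Q(r)}\,dr$ rather than claim $\frac14$.
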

\begin{corollary}\label{Rav990} The following estimate holds for all $t \in [0, \, T]$ and $k \geq 1$:
\[|A_{\rho} (\lambda_k, t)-1|\geq C(\rho, \gamma, \lambda_1) t.\]
\end{corollary}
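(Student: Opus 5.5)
The estimate follows directly from the two facts about $A_\rho$ and $B_\rho$ just established: the integral representation \eqref{Rav2510} of Lemma \ref{Rav280} and the lower bound of Lemma \ref{Rav580}.

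First I remove the absolute value. By Lemma \ref{Rav230}, $0<A_{\rho}(\lambda_k,t)\le 1$, so $|A_{\rho}(\lambda_k,t)-1| = 1-A_{\rho}(\lambda_k,t)$. Applying \eqref{Rav2510} with $\lambda=\lambda_k$ then gives
\[
|A_{\rho}(\lambda_k,t)-1| = \lambda_k \int\limits_0^t B_{\rho}(\lambda_k,\tau)\,d\tau .
\]

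Next I use Lemma \ref{Rav580}. For every $\tau\in[0,t]\subset[0,T]$ it gives $B_{\rho}(\lambda_k,\tau)\ge C(\rho,\gamma,\lambda_1)/\lambda_k$. Integrating this bound over $[0,t]$ and multiplying by $\lambda_k$ yields
\[
\lambda_k\int\limits_0^t B_{\rho}(\lambda_k,\tau)\,d\tau \ge \lambda_k\cdot \frac{C(\rho,\gamma,\lambda_1)}{\lambda_k}\, t = C(\rho,\gamma,\lambda_1)\,t .
\]
The factor $\lambda_k$ cancels, so the bound is uniform in $k$. The constant is the one from Lemma \ref{Rav580}, which depends only on $\rho,\gamma,\lambda_1$ and $T$.

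The only point needing care is checking that Lemma \ref{Rav580} really holds on the whole interval $[0,T]$, including near $\tau=0$. This is clear from the integral representation \eqref{Rav252}: since $e^{-r\tau}\ge e^{-rT}$ for $\tau\le T$, the bound is uniform there. I will also note that the constant in the corollary is positive, because the integrand $r^{\rho}e^{-rT}\big/\bigl(r^2/\lambda_1^2+\gamma^2r^{2\rho}+1\bigr)$ is positive and integrable. Apart from these remarks there is no real obstacle, since the argument is just integrating a pointwise lower bound.
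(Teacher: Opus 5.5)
Your proof is correct and is essentially the paper's argument: combine the identity $1-A_{\rho}(\lambda_k,t)=\lambda_k\int_0^t B_{\rho}(\lambda_k,\tau)\,d\tau$ from Lemma \ref{Rav280} with the pointwise lower bound of Lemma \ref{Rav580}, then integrate over $[0,t]$ so that $\lambda_k$ cancels. Your extra remarks are harmless refinements of that same one-line argument: removing the absolute value via Lemma \ref{Rav230}, uniformity of the bound on $[0,T]$, and positivity of the constant and its dependence on $T$.
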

\begin{proof}
By Lemmas \ref{Rav280} and \ref{Rav580}, we obtain
\[|A_{\rho} (\lambda_k, t)-1|=
|\lambda_k \displaystyle\int\limits_0^t B_{\rho} (\lambda_k, \tau)\,d \tau |
\geq
\displaystyle\int\limits_0^t 
C(\rho, \gamma, \lambda_1) \,d \tau 
=
C(\rho, \gamma, \lambda_1) t.
\]
\end{proof}
\begin{lemma}\label{Rav270} (see \cite{AshR98})
There exists a constant $C>0$ such that for any $\varepsilon, \, 0<\varepsilon<1$, we have
\[
D_{t} B_{\rho} (\lambda, t)\leq \dfrac{C \lambda^{\varepsilon}}{t^{1-\varepsilon(1-\rho)}}, \,\ t>0.
\]
\end{lemma}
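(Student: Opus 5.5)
We need to prove that $D_t B_\rho(\lambda,t)\le C\lambda^\varepsilon t^{-1+\varepsilon(1-\rho)}$ for $t>0$ and $0<\varepsilon<1$.

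\textbf{Step 1: reduce to the absolute value.} Differentiating \eqref{Rav252} under the integral sign gives
\[
D_tB_\rho(\lambda,t)=-\frac{\gamma\sin\rho\pi}{\pi}\int_0^\infty e^{-rt}\,\frac{\lambda r^{\rho+1}}{Q(r)}\,dr,
\qquad
Q(r)=(-r+\lambda\gamma r^\rho\cos\rho\pi+\lambda)^2+(\lambda\gamma r^\rho\sin\rho\pi)^2 .
\]
This derivative is negative, so it suffices to bound $|D_tB_\rho|$. Since $Q(r)\ge(\lambda\gamma r^\rho\sin\rho\pi)^2$, we get $|D_tB_\rho|\le C\lambda\int_0^\infty e^{-rt}r^{\rho+1}/Q(r)\,dr$.

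\textbf{Step 2: two elementary bounds on the integrand.} The crude bound $Q(r)\ge c\lambda^2r^{2\rho}$ gives
\[
\frac{\lambda r^{\rho+1}}{Q(r)}\le \frac{C\,r^{1-\rho}}{\lambda}. \qquad\text{(a)}
\]
For a bound with no negative power of $\lambda$, the plan is to show $Q(r)\ge c\,(r^2+\lambda^2r^{2\rho})$. This follows by splitting into two cases:
\begin{itemize}
\item if $r\ge 2(\lambda+\lambda\gamma r^\rho|\cos\rho\pi|)$, then the first square is at least $r^2/4$;
\item otherwise $r\lesssim \lambda+\lambda r^\rho$, and then $r^2\lesssim \lambda^2+\lambda^2r^{2\rho}$, which is controlled by $Q$ (away from small $r$).
\end{itemize}
A cleaner route is to write $Q=|{-r}+\lambda+\lambda\gamma r^\rho e^{i\rho\pi}|^2$ and use that the three vectors $-r$, $\lambda$ and $\lambda\gamma r^\rho e^{i\rho\pi}$ lie in a half-plane with a sector structure. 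Concretely, the imaginary part is $\lambda\gamma r^\rho\sin\rho\pi$, and whenever $r\gg\lambda\gamma r^\rho$ the real part is of order $r$. Given the lower bound on $Q$, AM--GM yields $Q\ge c\,r^{1+\rho}\lambda$, hence
\[
\frac{\lambda r^{\rho+1}}{Q(r)}\le C. \qquad\text{(b)}
\]

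\textbf{Step 3: interpolate and integrate.} Taking the geometric mean of (a) and (b) with weights $\varepsilon$ and $1-\varepsilon$ does not produce $\lambda^{+\varepsilon}$, so instead I would interpolate between (b) and a bound of the form $\lambda r^{\rho+1}/Q\le C\lambda r^{\rho-1}$, which follows from $Q\ge cr^2$ for large $r$. Interpolating these gives
\[
\frac{\lambda r^{\rho+1}}{Q(r)}\le C\,\lambda^\varepsilon r^{-\varepsilon(1-\rho)}.
\]
Integrating against $e^{-rt}$ then yields
\[
\int_0^\infty e^{-rt}\,\lambda^\varepsilon r^{-\varepsilon(1-\rho)}\,dr
=\lambda^\varepsilon\,\Gamma\bigl(1-\varepsilon(1-\rho)\bigr)\,t^{-1+\varepsilon(1-\rho)},
\]
where $1-\varepsilon(1-\rho)>0$ ensures the Gamma value is finite. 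This is exactly the claimed estimate.

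\textbf{Main obstacle.} The hard step is the uniform lower bound $Q(r)\ge c\max(r^2,\lambda^2r^{2\rho},\lambda r^{1+\rho})$ in the region where $-r+\lambda$ nearly cancels. There the imaginary part $\lambda\gamma r^\rho\sin\rho\pi$ must carry the estimate, using $r\approx\lambda$ so that $\lambda^2r^{2\rho}\approx r^2\cdot r^{2\rho}$. For $r\ge1$ this works; for small $\lambda$ and $r$ it requires $\lambda\ge\lambda_1$-type constants. I would track this by allowing $C$ to depend on $\lambda_1$.
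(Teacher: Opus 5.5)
The paper gives no proof of this lemma; it quotes it from \cite{AshR98}. It then uses it, in the proof of Theorem~\ref{Rav372}, in the form $|D_tB_\rho(\lambda_k,t)|\le C\lambda_k^{\varepsilon}t^{-1+\varepsilon(1-\rho)}$ with $\lambda_k\ge\lambda_1$. As literally written the inequality is trivial, because your Step~1 shows $D_tB_\rho<0$. What you actually prove is the absolute-value version, which is the one needed, and your proof of it is correct. The bound $Q(r)\ge c\,(r^2+\lambda^2r^{2\rho})$ with $c=c(\rho,\gamma,\lambda_1)$ gives $\lambda r^{\rho+1}/Q\le\min\{C,\,C\lambda r^{\rho-1}\}\le C\lambda^{\varepsilon}r^{-\varepsilon(1-\rho)}$; this is equivalent to a weighted AM--GM between $r^2$ and $\lambda^2r^{2\rho}$. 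Integrating against $e^{-rt}$ then yields exactly $t^{-1+\varepsilon(1-\rho)}$. Compared with the bare citation, your argument makes the constant explicit: it depends only on $\rho,\gamma,\lambda_1$. It is also uniform in $\varepsilon$, since $\min\{x,y\}\le x^{1-\varepsilon}y^{\varepsilon}$ and $\Gamma(1-\varepsilon(1-\rho))\le\Gamma(\rho)$. You should say this explicitly, because the lemma fixes $C$ before $\varepsilon$.

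Three details need attention.

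First, in Step~3 you need $Q\ge cr^2$ for all $r>0$, not just for large $r$. Your case split gives it for $r\ge r_0$. For $r<r_0$ the real part suffices: if $\gamma r_0^{\rho}|\cos\rho\pi|\le 1/4$ and $r_0\le\lambda_1/4$, then $\lambda-r+\lambda\gamma r^{\rho}\cos\rho\pi\ge\lambda/2$, so $Q\ge\lambda_1^2/4>r^2$.

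Second, your heuristic that near-cancellation occurs at $r\approx\lambda$ ignores the term $\lambda\gamma r^\rho\cos\rho\pi$. For example, when $\rho>1/2$ and $\lambda$ is large, the real part vanishes near the bounded point $r\approx(\gamma|\cos\rho\pi|)^{-1/\rho}$, where $Q\sim\lambda^2\gg r^2$. The case split does not rely on this heuristic, so nothing breaks.

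Third, the dependence on $\lambda_1$ that you allow is not an artefact of the method; it is unavoidable. For $\lambda\to0$ and $t=1/\lambda$ one has $B_\rho(\lambda,t)\approx e^{-\lambda t}$ and $|D_tB_\rho(\lambda,t)|\sim\lambda/e$, while the right-hand side equals $C\lambda^{1+\varepsilon\rho}$. So the lemma must be read with $\lambda$ bounded below, which is how the paper uses it.
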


\section{Forward problem}
Let us present the Cauchy problem \eqref{Rav371} using  the following theorem.
\begin{theorem}\label{Rav372}
Let $\varepsilon \in (0, \, 1)$ and $f \in C([0, \, T]; \, D(A^{\varepsilon}))$.
Then the Cauchy problem \eqref{Rav371} has a unique solution for any $\phi \in D(A)$, and this solution has the form
\begin{equation}\label{Rav373}
u(t)=\sum _{k=1}^{\infty } \, A_{\rho}(\lambda_k, t)\phi_{k} v_k
+\sum _{k=1}^{\infty } \, \left[\int\limits_0^t B_{\rho} (\lambda_k, t-\tau) f_k(\tau)\,d \tau\right] \, v_{k}.
\end{equation}
Furthermore, the following coercivity-type estimate holds for some constants $C, C_{\varepsilon}>0$:
\[
\|D_t u(t)\|^2+\|A u(t)\|^{2}+\|A D_t^{\rho} u(t)\|^2
\]
\begin{equation}\label{Rav374}
\leq Ct^{2(\rho-1)}\|\phi\|^2+ \|\phi\|_1^2+C_{\varepsilon} \max_{t \in [0,\,T]} \|f(t)\|_{\varepsilon}^2, \quad 0< t \leq T.
\end{equation}
\end{theorem}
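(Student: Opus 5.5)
We will argue by the Fourier method, in three stages. First we reduce the problem to the scalar Cauchy problems \eqref{Rav250} with $\lambda=\lambda_k$, $y_0=\phi_k$, $f=f_k$. Then we bound the series \eqref{Rav373} term by term using Lemmas \ref{Rav230}, \ref{Rav260}, \ref{Rav280} and \ref{Rav270}. Finally we prove uniqueness by completeness of $\{v_k\}$.

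\textbf{Uniqueness.} Let $u$ be a solution with $\phi=0$, $f=0$, and set $u_k(t)=(u(t),v_k)$. Since $A$ is self-adjoint, $(Au,v_k)=\lambda_k u_k$ and $(AD_t^\rho u,v_k)=\lambda_k D_t^\rho u_k$. Hence $u_k$ solves \eqref{Rav250} with zero data. By Lemma \ref{Rav240} (uniqueness of the scalar problem, via the Laplace transform) $u_k\equiv0$ for every $k$, so $u\equiv 0$.

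\textbf{Existence.} Lemma \ref{Rav240} shows that each coefficient of \eqref{Rav373}, namely $T_k(t)=A_\rho(\lambda_k,t)\phi_k+\int_0^t B_\rho(\lambda_k,t-\tau)f_k(\tau)d\tau$, solves the $k$-th scalar equation formally. We must justify termwise differentiation and the claimed estimates. For the term $Au$ we use $0<A_\rho<1$ and obtain $\|\sum\lambda_k A_\rho\phi_k v_k\|\le\|\phi\|_1$. For the convolution part we write $\lambda_k\int_0^t B_\rho(\lambda_k,t-\tau)f_k(\tau)d\tau$ and use Lemma \ref{Rav260}(2) in the interpolated form $\lambda_k B_\rho(\lambda_k,s)\le C\lambda_k^{\varepsilon}s^{\rho-1}$ with a power chosen so that $s^{\cdot}$ stays integrable, i.e.\ we interpolate between $\lambda B\le Cs^{\rho-1}$ and $B<1$. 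This gives $\lambda_k B_\rho\le C(\lambda_k B_\rho)^{1-\varepsilon}\cdot\lambda_k^{\varepsilon} B_\rho^{\varepsilon}\le C\lambda_k^\varepsilon s^{(\rho-1)(1-\varepsilon)}$. By Cauchy--Schwarz in $\tau$ and Parseval, the $Au$ part of the source is then bounded by $C_\varepsilon\max\|f\|_\varepsilon$.

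For $D_tu$ we differentiate. Lemma \ref{Rav280} gives $D_tA_\rho(\lambda_k,t)=-\lambda_kB_\rho$, and Lemma \ref{Rav260}(2) then gives $|\lambda_k B_\rho\phi_k|\le Ct^{\rho-1}|\phi_k|$; this produces the $Ct^{2(\rho-1)}\|\phi\|^2$ term. The derivative of the convolution equals $B_\rho(\lambda_k,0)f_k(t)+\int_0^t D_tB_\rho(\lambda_k,t-\tau)f_k(\tau)d\tau$, and Lemma \ref{Rav270} supplies the integrable kernel $C\lambda_k^{\varepsilon}s^{\varepsilon(1-\rho)-1}$, hence again a bound by $C_\varepsilon\max\|f\|_\varepsilon$. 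Finally, we do not estimate $AD_t^\rho u$ directly: we obtain it from the equation, $\gamma AD_t^\rho u=f-D_tu-Au$, and bound it by the previous estimates together with \eqref{Rav570}. Uniform convergence on $[\delta,T]$ of all these series, which follows from the same bounds via the Weierstrass test, gives membership in $C((0,T];H)$. Continuity at $t=0$ of $u$ itself follows from $|A_\rho|\le1$ and the bound $\int_0^tB_\rho\le 1/\lambda_k$ from Lemma \ref{Rav260}(3).

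\textbf{Main obstacle.} The hard part is the convolution estimates. The kernels $\lambda_kB_\rho$ and $D_tB_\rho$ must be controlled uniformly in $k$ with only an $\varepsilon$-power of $\lambda_k$ and an integrable singularity in $s$. It also needs checking that Lemma \ref{Rav270} gives an absolute bound, since it is stated only one-sidedly. If it does not, we would instead bound $|D_tB_\rho|$ directly from the integral representation \eqref{Rav252}, using $\int_0^\infty re^{-rs}(\cdots)dr$ and splitting the domain at $r\sim\lambda_k^{1/(1-\rho)}$.
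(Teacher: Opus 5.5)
Your proposal is correct and follows the paper's proof essentially step for step: reduction to the scalar problems of Lemma \ref{Rav240}, termwise bounds from Lemmas \ref{Rav230}, \ref{Rav260}, \ref{Rav280} and \ref{Rav270}, recovery of $AD_t^\rho u$ from the equation, and uniqueness by completeness of $\{v_k\}$. Two small remarks. First, your interpolated kernel for the $Au$ convolution term is unnecessary: $\lambda_kB_\rho(\lambda_k,s)\le Cs^{\rho-1}$ is already integrable, and the paper combines it with the generalized Minkowski inequality to get a bound by $\max\|f(t)\|$; use Minkowski or the weighted Cauchy--Schwarz rather than the unweighted one, since $s^{(\rho-1)(1-\varepsilon)}$ need not be square integrable. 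Second, your worry about Lemma \ref{Rav270} is justified: \eqref{Rav252} gives $D_tB_\rho<0$, so the stated one-sided bound is vacuous and the paper tacitly uses $|D_tB_\rho(\lambda,t)|\le C\lambda^{\varepsilon}t^{\varepsilon(1-\rho)-1}$, which does follow from \eqref{Rav252} after bounding the denominator below by $c(r^2+\lambda^2r^{2\rho})$ for $\lambda\ge\lambda_1$.
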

\begin{proof}
Suppose a solution $u(t)$ to the problem \eqref{Rav371} exists. Since the system $\{v_k\}$ is complete in $H$, the solution can be represented in the form:
\begin{equation}\label{Rav610}
u(t)=\sum _{k=1}^{\infty } \, T_{k} (t)\, v_{k}.
\end{equation}
Substituting this expression into \eqref{Rav371}, we arrive at the Cauchy problem for the coefficients $T_k(t)$:
\begin{equation}\label{Rav620}
\left\{
\begin{array}{ll}
T_{k}'(t)+\lambda_k(1+\lambda D_{t}^{\rho})T_{k}(t)=f_{k}(t), \,\ 0<t \leq T; \\
T_{k}(0)=\phi_{k}.
\end{array}
\right.
\end{equation}
According to Lemma \ref{Rav240}, the solution to this problem is as follows:
\[T_k(t)=A_{\rho}(\lambda_k, t) \phi_{k}+\int\limits_0^t B_{\rho} (\lambda_k, t-\tau) f_k(\tau) \,d \tau.\]
Thus, the series \eqref{Rav373} represents a formal solution to the problem \eqref{Rav371}.

The uniqueness of the solution is established by showing that under homogeneous conditions ($\phi_k=0$, $f_k(t)=0$), the solution to the problem \eqref{Rav620} is identically zero. Due to the completeness of $\{v_k\}$, this follows from the explicit form of the function $T_k(t)$.

Let us prove that the function \eqref{Rav373} satisfies all the conditions of Definition \ref{Rav10}. 

Applying Parseval's equality and the first estimate of Lemma \ref{Rav230}, one has
\[
\left\|\sum _{k=1}^{m} \, A_{\rho}(\lambda_k, t) \phi_{k} v_k \right\|^2
= \sum _{k=1}^{m} \, \left| A_{\rho}(\lambda_k, t) \phi_{k} \right|^2
\leq
\sum _{k=1}^{m} \, |\phi_{k}|^2
\leq \|\phi\|^2.
\]
Applying Parseval's equality, the first estimate of Lemma \ref{Rav260}, and H\"older's inequality, we arrive at
\[
\left\| \sum _{k=1}^{m} \, \left[\int\limits_0^t B_{\rho} (\lambda_k, t-\tau) f_k(\tau)\,d \tau\right] \, v_{k} \right\|^2
= \sum _{k=1}^{m} \, \left| \int\limits_0^t B_{\rho} (\lambda_k, t-\tau) f_k(\tau)\,d \tau \right|^2
\]
\[
\leq
\sum _{k=1}^{m} \,
\left(\int\limits_0^t |B_{\rho} (\lambda_k, t-\tau)|^2 \,d \tau \right)
\left(\int\limits_0^t|f_k(\tau)|^2\,d \tau \right)
\]
(from the first estimate of Lemma \ref{Rav260}, one has $|B_{\rho} (\lambda_k, t-\tau)|^2\leq B_{\rho} (\lambda_k, t-\tau)$)
\[
\leq \sum _{k=1}^{m} \, \dfrac{1}{\lambda_k} \int\limits_0^t|f_k(\tau)|^2\,d \tau
\leq \dfrac{1}{\lambda_1} \int\limits_0^t \sum _{k=1}^{m} \, |f_k(\tau)|^2\,d \tau
\leq C_T \max_{0 \leq t \leq T} \|f\|^2.
\]
Thus,
\begin{equation}\label{Rav5101}
\left\| \sum _{k=1}^{m} \, \left[\int\limits_0^t B_{\rho} (\lambda_k, t-\tau) f_k(\tau)\,d \tau\right] \, v_{k} \right\|^2
\leq C_T \max_{0 \leq t \leq T} \|f\|^2.
\end{equation}

Next, consider the action of the operator $A$.
Apply Parseval's equality, the first estimate of Lemma \ref{Rav230}, to get
\[
\left\|\sum _{k=1}^{m} \, \lambda_k A_{\rho}(\lambda_k, t) \phi_{k} v_k \right\|^2
=
\sum _{k=1}^{m} \, \left|\lambda_k A_{\rho}(\lambda_k, t) \phi_{k} \right|^2
\leq
\sum _{k=1}^{m} \, \lambda_k^2 |\phi_{k}|^2
\leq
\|\phi\|_1^2.
\]
Again, using Parseval's equality, the second estimate of Lemma \ref{Rav260}, we obtain
\[
\left\|
\sum _{k=1}^{m} \,
\left[
\int\limits_0^t \lambda_k B_{\rho} (\lambda_k, t-\tau) f_k(\tau)\,d \tau
\right]
\, v_{k}
\right\|^2
\]
\[
=
\sum _{k=1}^{m} \,
\left|
\int\limits_0^t \lambda_k B_{\rho} (\lambda_k, t-\tau) f_k(\tau)\,d \tau
\right|^2
\leq
\left[\left(
C_T \sum _{k=1}^{m} \,\left|
\int\limits_0^T |(T-\tau)^{\rho-1} f_k(\tau)|\,d \tau\right|^2
\right)^{\frac{1}{2}}\right]^2
\]
(apply the generalized Minkowski inequality)
\[
\leq
C_T
\left[
\int\limits_0^T (T-\tau)^{\rho-1} \left( \sum _{k=1}^{m} \, |f_k(\tau)|^2 \right)^{\frac{1}{2}}\,d \tau
\right]^2
\leq
C_T \max_{t \in [0,\,T]} \|f(t)\|^2, \,\ t>0.
\]
Thus,
\begin{equation}\label{Rav5201}
\left\|
\sum _{k=1}^{m} \,
\left[
\int\limits_0^t \lambda_k B_{\rho} (\lambda_k, t-\tau) f_k(\tau)\,d \tau
\right]
\, v_{k}
\right\|^2
\leq
C_T \max_{t \in [0,\,T]} \|f(t)\|^2.
\end{equation}

Now, consider the differentiation with respect to $t$. Applying Parseval's equality, Lemma \ref{Rav280}, and the second estimate of Lemma \ref{Rav260}, we obtain
\[
\left\|
\sum _{k=1}^{m} \, D_t A_{\rho}(\lambda_k, t) \phi_{k} v_k
\right\|^2
=
\left\|
\sum _{k=1}^{m} \, \lambda_k B_{\rho} (\lambda_k, t)\phi_{k} v_k
\right\|^2
=
\sum _{k=1}^{m} \, \left|\lambda_k B_{\rho} (\lambda_k, t)\phi_{k}\right|^2
\]
\[
\leq
C t^{2(\rho-1)} \sum _{k=1}^{m} \, |\phi_{k}|^2 \leq C t^{2(\rho-1)} \|\phi\|^{2}, \,\ t>0.
\]
By applying Parseval's equality, we have
\[
\left\|
\sum _{k=1}^{m} \,
D_t
\left[
\int\limits_0^t B_{\rho} (\lambda_k, t-\tau) f_k(\tau)\,d \tau
\right]
\, v_{k}
\right\|^2
=
\left\|
\sum _{k=1}^{m} \,
\left[
f_k(t) +
\int\limits_0^t D_t B_{\rho} (\lambda_k, t-\tau) f_k(\tau)\,d \tau
\right]
\, v_{k}
\right\|^2
\]
\[
=
\sum _{k=1}^{m} \,
\left|
f_k(t) +
\int\limits_0^t D_t B_{\rho} (\lambda_k, t-\tau) f_k(\tau)\,d \tau
\right|^2
\]
\[
\leq
2 \max_{0 \leq t \leq T} \|f\|^2 +2 \sum _{k=1}^{m} \,
\int\limits_0^t \left| D_t B_{\rho} (\lambda_k, t-\tau) f_k(\tau)\right|^2 \,d \tau
\]
(for the non-homogeneous part, applying the generalized Minkowski inequality and Lemma \ref{Rav270})
\[
\leq
2 \max_{0 \leq t \leq T} \|f\|^2 +
C \left(
\int\limits_0^t \tau^{\varepsilon(1-\rho)-1}
\left(
\sum _{k=1}^{m} \, |\lambda_k^{\varepsilon} f_{k}(\tau)|^2
\right)^{\frac{1}{2}}\,d \tau
\right)^2
\]
\[
\leq
2 \max_{0 \leq t \leq T} \|f\|^2 +
C_{\varepsilon} \max_{t \in [0,\,T]} \|f(t)\|_{\varepsilon}^2.
\]
Hence,
\begin{equation}\label{Rav5301}
\left\|
\sum _{k=1}^{m} \,
D_t
\left[
\int\limits_0^t B_{\rho} (\lambda_k, t-\tau) f_k(\tau)\,d \tau
\right]
\, v_{k}
\right\|^2
\leq
2 \max_{0 \leq t \leq T} \|f\|^2 +
C_{\varepsilon} \max_{t \in [0,\,T]} \|f(t)\|_{\varepsilon}^2.
\end{equation}
Finally, using equation \eqref{Rav370}, we bound the term
\[\|AD_t^{\rho} u(t)\| \leq \|D_t u(t)\|+\|A u(t)\|+\|f(t)\|.\]
Combining the above estimates, we conclude
\[\|AD_t^{\rho} u(t)\|^2 \leq
C t^{2(\rho-1)} \|\phi\|^2+ \|\phi\|_1^2+C_{\varepsilon} \max_{t \in [0,\,T]} \|f(t)\|_{\varepsilon}^2, \,\ t>0.\]
Here we applied  \eqref{Rav570} with $\varepsilon>0$. Therefore, it follows that the series \eqref{Rav373} is indeed a solution to the problem \eqref{Rav371}, which completes the proof of Theorem \ref{Rav372}.
\end{proof}

\section{Auxiliary problems}
The solution of problem \eqref{Rav1} for $\alpha=1$ is obtained by considering two associated auxiliary problems:
\begin{equation}\label{Rav370}
\left\{
\begin{array}{ll}
D_{t} V(t)+A(1+\gamma D_{t}^{\rho}) V(t)=f(t), \quad 0<t \leq T; \\
V(0)=0.
\end{array}
\right.
\end{equation}
and
\begin{equation}\label{Rav380}
\left\{
\begin{array}{ll}
D_{t} W(t)+A(1+\gamma D_{t}^{\rho})W(t)=0, \quad 0<t \leq T; \\
W (T)=W(0)+\psi,
\end{array}
\right.
\end{equation}
where a function $\psi\in H$ is given.

Since problems \eqref{Rav370} and \eqref{Rav380} are special cases of problems \eqref{Rav1}, their solutions are defined in the same manner as in Definition \ref{Rav10}.

If we define $\psi = \varphi - V(T)$ and let $V(t)$ and $W(t)$ be the solutions to their respective sub-problems, then the solution to \eqref{Rav1} is given by $u(t) = W(t) + V(t)$. So, it is sufficient to study the auxiliary problems.

Let us first note that if condition $\phi=0$ holds, then Theorem \ref{Rav372} yields the following result for the auxiliary problem \eqref{Rav370}:
\begin{theorem}\label{Rav115}
Let $\varepsilon \in (0, \, 1)$ and $f \in C([0, \, T]; \, D(A^{\varepsilon}))$. Then the problem \eqref{Rav370} has a unique solution given by:
\begin{equation}\label{Rav390}
V(t)=\sum _{k=1}^{\infty } \,
\left[
\int\limits_0^t B_{\rho} (\lambda_k, t-\tau) f_k(\tau)\,d \tau
\right]
\, v_{k}.
\end{equation}
Furthermore, there exists a constant $C_{\varepsilon}>0$ such that the following coercive inequality is satisfied:
\begin{equation}\label{Rav391}
\|D_{t} V(t)\|^2+\|AV(t)\|^{2}+\|AD_{t}^{\rho} V(t)\|^2
\leq
C_{\varepsilon} \max_{t \in [0,\,T]} \|f(t)\|_{\varepsilon}^2, \quad 0 \leq t \leq T.
\end{equation}
\end{theorem}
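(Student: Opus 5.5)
The plan is to specialize Theorem \ref{Rav372} to $\phi=0$. Existence, uniqueness and the representation \eqref{Rav390} then follow at once. With zero initial data, the first series in \eqref{Rav373} vanishes identically, so the formal solution reduces to $V(t)$ as written. Uniqueness also carries over: the difference of two solutions solves \eqref{Rav371} with $\phi=0$ and $f=0$, so every Fourier coefficient $T_k$ solves \eqref{Rav620} with zero data, and Lemma \ref{Rav240} forces $T_k\equiv 0$. Completeness of $\{v_k\}$ then gives $V\equiv 0$. The membership of $D_tV$, $AV$ and $AD_t^\rho V$ in $C((0,T];H)$ is exactly what the proof of Theorem \ref{Rav372} established for the $f$-part of the solution.

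For the coercive estimate \eqref{Rav391}, I will reuse the partial-sum bounds from that proof, now without the $\phi$-terms. Bound \eqref{Rav5201} gives $\|AV(t)\|^2\le C_T\max\|f\|^2$. Bound \eqref{Rav5301} gives $\|D_tV(t)\|^2\le 2\max\|f\|^2+C_\varepsilon\max\|f\|_\varepsilon^2$. In both, I replace $\|f\|$ by $C_\varepsilon\|f\|_\varepsilon$ using \eqref{Rav570} with $\tau=\varepsilon>0$. The third term is controlled from the equation itself, since $\gamma AD_t^\rho V=f-D_tV-AV$. Hence $\|AD_t^\rho V\|^2\le C\gamma^{-2}(\|f\|^2+\|D_tV\|^2+\|AV\|^2)$, which is again bounded by $C_\varepsilon\max\|f\|_\varepsilon^2$. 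Because the factor $t^{2(\rho-1)}$ in \eqref{Rav374} came only from $\phi$, no singular factor in $t$ appears. The estimate therefore holds uniformly for $0<t\le T$.

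At $t=0$ the estimate is trivially consistent: $V(0)=0$, and the bounds are uniform, so they extend by continuity, or one simply reads the inequality on $(0,T]$. Passing from partial sums to the full series is done by the usual Cauchy-criterion argument, applying the same bounds to tail sums $\sum_{k=m}^{p}$.

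The main obstacle I expect is the estimate of $D_tV$, specifically the convolution term $\int_0^t D_tB_\rho(\lambda_k,t-\tau)f_k(\tau)\,d\tau$. Lemma \ref{Rav270} gives only a one-sided bound on $D_tB_\rho$, so I need a bound on its absolute value. I would first try the representation of $B_\rho$ as a Laplace-type integral (\eqref{Rav252}). Differentiating under the integral shows $D_tB_\rho<0$, and then $|D_tB_\rho|=-D_tB_\rho$ can be estimated directly. Interpolating between $|D_tB_\rho|\le C\lambda t^{\rho-2}$-type and $Ct^{-1}$-type bounds would then give $C\lambda^\varepsilon t^{\varepsilon(1-\rho)-1}$, which is integrable near $0$. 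With this in hand, the generalized Minkowski inequality yields $C_\varepsilon\max\|f\|_\varepsilon^2$, as in \eqref{Rav5301}.
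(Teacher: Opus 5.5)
Your overall route is the paper's own: its proof of Theorem~\ref{Rav115} is simply Theorem~\ref{Rav372} with $\phi=0$. There, \eqref{Rav5201}, \eqref{Rav5301} and the equation control $AV$, $D_tV$ and $AD_t^{\rho}V$. You are also right that Lemma~\ref{Rav270} as stated is only one-sided. In fact it is vacuous, since $D_tB_\rho<0$, yet the paper uses it as a bound on $|D_tB_\rho|$.

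Your proposed repair of that step fails, however. Interpolating $C\lambda t^{\rho-2}$ with $Ct^{-1}$ gives
\[
(\lambda t^{\rho-2})^{\varepsilon}(t^{-1})^{1-\varepsilon}=\lambda^{\varepsilon}t^{-1-\varepsilon(1-\rho)},
\]
not $\lambda^{\varepsilon}t^{-1+\varepsilon(1-\rho)}$. Both endpoint bounds have $t$-exponent $\le -1$, so no interpolation between them is integrable at $0$. The generalized Minkowski step behind \eqref{Rav5301} then produces a divergent integral $\int_0^t (t-\tau)^{-1-\varepsilon(1-\rho)}\|f(\tau)\|_{\varepsilon}\,d\tau$.

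The missing ingredient is a small-time bound that beats $t^{-1}$ at the price of a power of $\lambda$, namely $|D_tB_\rho(\lambda,t)|\le C\lambda t^{-\rho}$. It comes from \eqref{Rav252}. Consider the denominator
\[
Q_\lambda(r)=(-r+\lambda+\lambda\gamma r^{\rho}\cos\rho\pi)^2+(\lambda\gamma r^{\rho}\sin\rho\pi)^2 .
\]
For $\lambda\ge\lambda_1$ it satisfies $Q_\lambda(r)\ge c_0r^2$ with $c_0=c_0(\rho,\gamma,\lambda_1)>0$. Indeed, if $r<4\lambda$ or $r^{1-\rho}<4\lambda\gamma$, the imaginary part $\lambda\gamma r^{\rho}\sin\rho\pi$ is at least $c\,r$. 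Otherwise the real part is at most $-r/2$. Hence
\[
-D_tB_\rho(\lambda,t)=\frac{\gamma\sin\rho\pi}{\pi}\int_0^\infty e^{-rt}\,\frac{\lambda r^{1+\rho}}{Q_\lambda(r)}\,dr\le C\lambda\int_0^\infty e^{-rt}r^{\rho-1}\,dr=C\Gamma(\rho)\lambda t^{-\rho}.
\]
Separately, $re^{-rt/2}\le 2/(et)$ and $B_\rho\le 1$ give $-D_tB_\rho(\lambda,t)\le \frac{2}{et}B_\rho(\lambda,t/2)\le \frac{2}{et}$. Combining the two bounds,
\[
|D_tB_\rho(\lambda,t)|\le C\min\{t^{-1},\lambda t^{-\rho}\}\le C\lambda^{\varepsilon}t^{-1+\varepsilon(1-\rho)} .
\]
This is the two-sided form of Lemma~\ref{Rav270} that both your argument and the paper's actually require. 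With it in place, the rest of your proposal goes through.
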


Let us present the following theorem for the problem \eqref{Rav380}. 

\begin{theorem}\label{Rav103}
The problem \eqref{Rav380} has a unique solution for any $\psi \in D(A)$ and this solution has the form
\begin{equation}\label{Rav401}
W(t)=\sum _{k=1}^{\infty}
\dfrac{A_{\rho} (\lambda_k, t)}{A_{\rho} (\lambda_k, T)-1} \psi_{k} v_{k}.
\end{equation}
Furthermore, there exists a constant $C_T := C(\rho, \gamma, \lambda_1, T)>0$ such that the following coercive inequality is satisfied:
\begin{equation}\label{Rav410}
\|D_{t} W(t)\|^2+\|AW(t)\|^{2}+\|AD_{t}^{\rho} W(t)\|^2 \leq
C_T \|\psi\|_{1}^{2}, \quad 0 < t \leq T .
\end{equation}
\end{theorem}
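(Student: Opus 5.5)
We follow the scheme of the proof of Theorem \ref{Rav372}. Any solution of \eqref{Rav380} can be expanded as $W(t)=\sum_k T_k(t)v_k$. Substituting into the equation, each coefficient solves the homogeneous equation of Lemma \ref{Rav210} with $\lambda=\lambda_k$. By Lemma \ref{Rav240} (with $f=0$) this forces $T_k(t)=T_k(0)A_\rho(\lambda_k,t)$. The non-local condition $T_k(T)=T_k(0)+\psi_k$ then becomes $T_k(0)\bigl(A_\rho(\lambda_k,T)-1\bigr)=\psi_k$.

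By Lemma \ref{Rav230}, $A_\rho(\lambda_k,T)<1$, so the factor $A_\rho(\lambda_k,T)-1$ never vanishes. Hence $T_k(0)=\psi_k/(A_\rho(\lambda_k,T)-1)$, and this gives the formal solution \eqref{Rav401}. Uniqueness follows immediately: if $\psi=0$, every $T_k(0)=0$, hence $T_k\equiv0$, and $W\equiv0$ by completeness of $\{v_k\}$.

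The key quantitative ingredient is a uniform lower bound on the denominator. Corollary \ref{Rav990} with $t=T$ gives
\[
|A_\rho(\lambda_k,T)-1|\ge C(\rho,\gamma,\lambda_1)\,T=:c_T>0
\]
for all $k$. So the coefficients of $W(0)$ are bounded by $c_T^{-1}|\psi_k|$, and the problem reduces to the forward problem with initial datum $\phi:=W(0)$, where $\|W(0)\|_1\le c_T^{-1}\|\psi\|_1$ and $\|W(0)\|\le c_T^{-1}\|\psi\|$. We could then invoke Theorem \ref{Rav372} with $f=0$. That would give the bound $Ct^{2(\rho-1)}\|\psi\|^2+\|\psi\|_1^2$, whose singular factor is not allowed in \eqref{Rav410}. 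I would therefore redo the estimates directly, using $\psi\in D(A)$.

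The estimates are termwise, via Parseval. For $\|AW(t)\|$, use $0<A_\rho\le1$ to get $\|AW(t)\|^2\le c_T^{-2}\sum\lambda_k^2|\psi_k|^2=c_T^{-2}\|\psi\|_1^2$. Continuity in $t$ on $[0,T]$, in $H$ and of $AW$, follows by the Weierstrass M-test, since the tail $\sum_{k>m}|\psi_k|^2$ controls the series uniformly. For $D_tW$, use Lemma \ref{Rav280}: $D_tA_\rho(\lambda_k,t)=-\lambda_kB_\rho(\lambda_k,t)$, and $0<B_\rho<1$ by Lemma \ref{Rav260}. Hence $\|D_tW(t)\|^2\le c_T^{-2}\sum\lambda_k^2|\psi_k|^2$, uniformly in $t$ with no singularity. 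This is exactly why $\psi\in D(A)$ is needed rather than $\psi\in H$. The same majorant gives uniform convergence of the differentiated series, which justifies termwise differentiation and continuity of $D_tW$. Finally, $AD_t^\rho W=\gamma^{-1}(-D_tW-AW)$ from the equation, which yields \eqref{Rav410} with $C_T$ of order $c_T^{-2}$.

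The main obstacle is justifying the term $AD_t^\rho W$ rigorously, i.e.\ that $D_t^\rho$ and $A$ can be applied termwise to the series. I would handle this by noting that the partial sums satisfy the equation exactly. Their $D_t$ and $A$ parts converge uniformly on $[0,T]$, so $\gamma AD_t^\rho$ of the partial sums converges uniformly as well. Because $A$ and $D_t^\rho$ are closed operators (the latter via the fractional-integral representation and uniform convergence of derivatives), the limit is $AD_t^\rho W$.
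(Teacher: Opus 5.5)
Your proof is correct and follows the paper's scheme. Both use the Fourier expansion $T_k(t)=b_kA_\rho(\lambda_k,t)$ with $b_k=\psi_k/(A_\rho(\lambda_k,T)-1)$, the uniform bound $|A_\rho(\lambda_k,T)-1|\ge C(\rho,\gamma,\lambda_1)T$ from Corollary \ref{Rav990}, termwise Parseval estimates, and the equation itself to recover $AD_t^\rho W$.

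The one substantive divergence is the bound for $D_tW$, and there your route is the better one. The paper uses the second estimate of Lemma \ref{Rav260}, $\lambda_kB_\rho(\lambda_k,t)\le Ct^{\rho-1}$. This gives $\|D_tW(t)\|^2\le C_T\,t^{2(\rho-1)}\|\psi\|^2$, which needs only $\psi\in H$ but blows up as $t\to0$. The paper then silently absorbs $t^{2(\rho-1)}$ into $C_T$, which is not legitimate for a constant independent of $t\in(0,T]$.

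You instead use the first estimate, $0<B_\rho<1$, so that $\lambda_kB_\rho(\lambda_k,t)\le\lambda_k$. This costs the $\|\psi\|_1$ norm, which the $AW$ term needs anyway, and gives a bound uniform on all of $[0,T]$. So your argument is what actually delivers \eqref{Rav410} as stated. It also gives the uniform convergence of the differentiated series on $[0,T]$. You rely on that for termwise differentiation and for the closedness argument identifying $AD_t^\rho W$, points the paper does not address.

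Finally, you correctly keep the factor $\gamma^{-1}$, and the factor $2$ from the triangle inequality, when recovering $AD_t^\rho W$ from the equation; the paper drops both.
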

\begin{proof}
Let $W(t)$ be a solution to the non-local problem \eqref{Rav380}. By the completeness of the system $\{v_k\}$ in $H$, the solution can be represented as:
\begin{equation}\label{Rav900}
W(t)=\sum _{k=1}^{\infty } \, T_{k} (t)\, v_{k},
\end{equation}
where $T_{k} (t)$ is a solution of the following problem:
\begin{equation}\label{Rav420}
\left\{
\begin{array}{ll}
D_{t} T_{k} (t)+\lambda_{k} T_{k} (t)+\gamma \lambda_{k} D_{t}^{\rho} T_{k} (t)=0; \\
T_{k}(T)=T_{k}(0)+\psi_k.
\end{array}
\right.
\end{equation}
Letting $T_{k}(0)=b_{k}$, Lemma \ref{Rav210}, implies
\begin{equation}\label{Rav910}
T_{k}(t)=b_{k} A_{\rho} (\lambda_k, t).
\end{equation}
Applying the non-local condition from \eqref{Rav420}, we find an equation to find unknown $b_k$:
\[
b_{k} (A_{\rho} (\lambda_k, T)-1)=\psi_{k}.
\]
Since $A_{\rho} (\lambda_k, T) \neq 1$(by the first estimate of Lemma \ref{Rav230}), we obtain the formal solution \eqref{Rav401} to the problem \eqref{Rav380}.

Let us prove the uniqueness of the solution to problem \eqref{Rav420}. The solution to problem \eqref{Rav420} with the condition $\psi_k=0$ has been defined subject to the condition $T_{k} (T)=T_{k} (0)$. According to Lemma \ref{Rav230}, this implies that $b_k \equiv 0$, and consequently $T_k (t)\equiv 0$. By the completeness of the system $\{v_k\}$, it follows that $W(t) \equiv 0$.

Let us obtain an estimate for $W(t)$. Apply  Parseval's equality, the first estimate of Lemma \ref{Rav230}, and corollary \ref{Rav990} to get
\[
\|\sum _{k=1}^{m}
\dfrac{A_{\rho} (\lambda_k, t)}{A_{\rho} (\lambda_k, T)-1} \psi_{k} v_k\|^2
=
\sum _{k=1}^{m}
|
\dfrac{A_{\rho} (\lambda_k, t)}{A_{\rho} (\lambda_k, T)-1} \psi_{k}|^2
\leq
\dfrac{1}{CT^2} \sum_{k=1}^{m} |\psi_{k}|^2
\leq
C_T \|\psi\|^2.
\]

Next, let us obtain an estimate of $D_t W(t)$. Applying Parseval's equality, the second estimate of Lemma \ref{Rav260}, Lemma \ref{Rav280} and corollary \ref{Rav990}, we obtain
\[
\|
\sum_{k=1}^{m}
\dfrac{D_t A_{\rho} (\lambda_k, t)}{A_{\rho} (\lambda_k, T)-1} \psi_{k} v_{k}
\|^2
=
\sum_{k=1}^{m} 
|
\dfrac{\lambda_k B_{\rho} (\lambda_k, t)}{A_{\rho} (\lambda_k, T)-1} \psi_{k}
|^2
\leq \dfrac{1}{CT^2} t^{2(\rho-1)} \sum_{k=1}^{m} |\psi_k |^2
\leq C_T \|\psi\|^{2}.
\]
Consider the action of the operator $A$. Using Parseval's equality,  the first estimate of Lemma \ref{Rav230}), corollary \ref{Rav990} we get
\[
\|
\sum_{k=1}^{m}\dfrac{\lambda_k A_{\rho} (\lambda_k, t)}{A_{\rho} (\lambda_k, T)-1} \psi_{k} v_{k}
\|^2
=
\sum_{k=1}^{m}
|
\dfrac{\lambda_k A_{\rho} (\lambda_k, t)}{A_{\rho} (\lambda_k, T)-1} \psi_{k}
|^2
\leq \dfrac{1}{CT^2} \sum _{k=1}^{m} |\lambda_k \psi_k |^2
\leq C_T\|\psi\|_{1}^{2}.
\]

It follows from equation \eqref{Rav380} that
\[
\|AD_{t}^{\rho}W(t)\|^2 \leq \|D_{t} W(t)\|^2+\|AW(t)\|^{2}\leq C_T \|\psi\|_{1}^{2}.
\]
Theorem \ref{Rav103} is completely proved.
\end{proof}

The following theorem for the non-local problem \eqref{Rav1} is obtained by combining the statements of the previous two theorems.
\begin{theorem}\label{Rav480}
Let $\varepsilon \in (0, \, 1)$, $f \in C([0, \, T]; \, D(A^{\varepsilon}))$. The problem \eqref{Rav1} has a unique solution for any $\varphi \in D(A)$ and this solution has the form
\begin{equation}\label{Rav450}
u(t)=\sum _{k=1}^{\infty } \, \left[\dfrac{A_{\rho} (\lambda_k, t)}{A_{\rho} (\lambda_k, T)-1}
(\varphi_{k}-V_{k}(T))+V_k(t) \right] \, v_{k},
\end{equation}
where
\[
V_k (t)=
\int\limits_0^t B_{\rho} (\lambda_k, t-\tau) f(\tau)\,d \tau.
\]

Furthermore, a coercivity-type estimate holds for certain constants
$C>0$ and $C_{\varepsilon}>0$ :
\[
\|D_{t} u(t)\|^2+\|Au(t)\|^{2}+\|AD_{t}^{\rho} u(t)\|^2
\]
\begin{equation}\label{Rav460}
\leq
C t^{2(\rho-1)}\|\varphi\|^{2}+C\|\varphi\|_{1}^{2}+C_{\varepsilon} \max_{t \in [0,\,T]} \|f(t)\|_{\varepsilon}^2, \quad 0 < t < T.
\end{equation}
\end{theorem}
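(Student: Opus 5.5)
The plan is to exploit the decomposition already set up at the start of this section. Take $V(t)$ to be the solution of \eqref{Rav370} furnished by Theorem \ref{Rav115}. Put $\psi=\varphi-V(T)$ and let $W(t)$ be the solution of \eqref{Rav380} given by Theorem \ref{Rav103}. Then set $u=V+W$. By linearity, $u$ satisfies the equation in \eqref{Rav1}. Since $V(0)=0$, it also satisfies the non-local condition:
$$u(T)=W(0)+\psi+V(T)=u(0)+\varphi .$$
Writing out Fourier coefficients via \eqref{Rav390} and \eqref{Rav401} then gives exactly \eqref{Rav450}, with $V_k(t)$ the $k$-th coefficient of $V$ (so $f$ should be read as $f_k$).

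Uniqueness is proved on the difference of two solutions. The difference $w$ solves the homogeneous equation with $w(T)=w(0)$. Each Fourier coefficient $w_k(t)=(w(t),v_k)$ therefore solves \eqref{Rav420} with $\psi_k=0$. As in the proof of Theorem \ref{Rav103}, Lemma \ref{Rav210} gives $w_k=b_kA_\rho(\lambda_k,t)$, and $A_\rho(\lambda_k,T)<1$ (Lemma \ref{Rav230}) forces $b_k=0$. Completeness of $\{v_k\}$ then yields $w\equiv0$.

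The main obstacle is checking that Theorem \ref{Rav103} applies, i.e.\ that $\psi\in D(A)$, equivalently $V(T)\in D(A)$. Estimate \eqref{Rav391} (through \eqref{Rav5201}) gives
$$\|V(T)\|_1=\|AV(T)\|\le C_\varepsilon\max_{[0,T]}\|f\|_\varepsilon ,$$
so $\psi\in D(A)$ with $\|\psi\|_1\le\|\varphi\|_1+C_\varepsilon\max\|f\|_\varepsilon$. Continuity of $u$ on $[0,T]$ and of $D_tu$, $Au$, $AD_t^\rho u$ on $(0,T]$ is inherited from $V$ and $W$, using uniform convergence of the partial sums from the same estimates.

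For the coercive estimate \eqref{Rav460}, I would add \eqref{Rav391} and \eqref{Rav410} using $(a+b)^2\le2a^2+2b^2$. The $W$-part is bounded by $C_T\|\psi\|_1^2$, which in turn is bounded by $2C_T\|\varphi\|_1^2+C_\varepsilon\max\|f\|_\varepsilon^2$. If one wants to keep the sharper form with $t^{2(\rho-1)}\|\varphi\|^2$, the $D_tW$ part should instead be estimated directly: Lemma \ref{Rav280}, Lemma \ref{Rav260}(2) and Corollary \ref{Rav990} bound it by $Ct^{2(\rho-1)}\|\psi\|^2$, after which one applies \eqref{Rav570} to $\|V(T)\|$. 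Collecting terms gives \eqref{Rav460}.
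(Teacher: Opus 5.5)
Your proposal is correct and follows the paper's own proof. You use the same decomposition $u=V+W$ with $\psi=\varphi-V(T)$, the same coefficient-wise uniqueness argument as for \eqref{Rav380}, and obtain \eqref{Rav460} by combining \eqref{Rav391} with \eqref{Rav410}. You also make explicit the check $\psi\in D(A)$ via $\|AV(T)\|\le C_\varepsilon\max\|f\|_\varepsilon$, which Theorem \ref{Rav103} requires but the paper only records as $\psi\in H$. Keep your first route for the estimate: the optional ``sharper'' variant as written leaves a term $t^{2(\rho-1)}\max\|f\|^2$, which is not of the form in \eqref{Rav460}, unless you apply the $t^{2(\rho-1)}$ bound only to the $\varphi$-part of $\psi$ and the $\|\cdot\|_1$ bound to the $V(T)$-part.
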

\begin{proof}
Let $f \in C([0, \, T]; \, D(A^{\varepsilon}))$ for some $\varepsilon \in (0, \, 1)$, and let $\varphi \in D(A)$. As noted above, if we define $\psi = \varphi - V(T) \in H$, where $V(t)$ and $W(t)$ are the solutions of problems \eqref{Rav370} and \eqref{Rav380}, respectively, then the function
$u(t) = V(t) + W(t)$ is a solution of the problem considered \eqref{Rav1}. Hence, the representation given in \eqref{Rav450} provides a unique solution to the problem.

Furthermore, the estimate \eqref{Rav460} follows directly from the estimates \eqref{Rav391} and \eqref{Rav410}.
\end{proof}

\section{Backward problem}
In this section, we study the following backward problem:
\begin{equation}\label{Rav500}
\left\{
\begin{array}{ll}
D_{t} U(t)+A(1+\gamma D_{t}^{\rho})U(t)=f(t), \quad \gamma>0, \quad 0<t<T; \\
U(T)=\psi,
\end{array}
\right.
\end{equation}
where $\psi \in H$ is a given vector. Let us present the following theorem.
\begin{theorem}\label{Rav510}
Let $f(t) \in C([0, \, T]; \, H)$. The problem
\eqref{Rav500} has a unique solution for any $\psi \in D(A)$. Furthermore, there exists a constant $C=C(\rho, \gamma, \lambda_1, t, T)>0$, such that the following estimate holds
\begin{equation}\label{Rav520}
\|D_t U(t)\|^2+\|A U(t)\|^{2}+\|A D_t^{\rho} U(t)\|^2
\leq C(\|\psi\|_{1}^2+\max_{t \in [0, \, T]} \|f(t)\|^2).
\end{equation}
\end{theorem}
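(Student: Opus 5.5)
The plan is to follow the scheme of Theorem \ref{Rav103}, using Lemma \ref{Rav361} in place of Corollary \ref{Rav990} as the key lower bound. I would first split the problem into $U=V+Y$, where $V$ solves \eqref{Rav370} with zero initial value and $Y$ solves the homogeneous equation with $Y(T)=\psi-V(T)$. Expanding in $\{v_k\}$ as in \eqref{Rav610} and using Lemma \ref{Rav210}, the coefficients of $Y$ must be $Y_k(t)=b_kA_\rho(\lambda_k,t)$. The final condition then gives $b_kA_\rho(\lambda_k,T)=\psi_k-V_k(T)$. By Lemma \ref{Rav230} we have $A_\rho(\lambda_k,T)>0$, so the coefficients are determined uniquely and
\[
U(t)=\sum_{k=1}^\infty\left[\frac{A_\rho(\lambda_k,t)}{A_\rho(\lambda_k,T)}\bigl(\psi_k-V_k(T)\bigr)+V_k(t)\right]v_k .
\]
Uniqueness follows as before: if $\psi=0$ and $f=0$, then $V\equiv0$ and $b_kA_\rho(\lambda_k,T)=0$, hence $b_k=0$ for every $k$.

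For the estimates I would apply Lemma \ref{Rav361} uniformly in $k$, namely $A_\rho(\lambda_k,T)\ge C(\rho,\gamma,\lambda_1)>0$. Combined with $0<A_\rho(\lambda_k,t)\le1$, this gives the multiplier bound $A_\rho(\lambda_k,t)/A_\rho(\lambda_k,T)\le C^{-1}$. By Parseval,
\[
\|AY(t)\|^2\le C\sum_k\lambda_k^2|\psi_k-V_k(T)|^2\le C\bigl(\|\psi\|_1^2+\|AV(T)\|^2\bigr).
\]
For the time derivative I use Lemma \ref{Rav280}, so that $D_tY_k=-\lambda_kB_\rho(\lambda_k,t)b_k$. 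Then $|\lambda_kB_\rho(\lambda_k,t)|\le \lambda_k$ by Lemma \ref{Rav260}(1), which gives $\|D_tY(t)\|^2\le C(\|\psi\|_1^2+\|AV(T)\|^2)$ with no singularity at $t=0$. Alternatively, Lemma \ref{Rav260}(2) gives the bound $Ct^{2(\rho-1)}\|\psi-V(T)\|^2$, and the allowed dependence of the constant on $t$ accommodates either route. Finally, $AD_t^\rho U$ is controlled through the equation, exactly as at the end of Theorem \ref{Rav103}:
\[
\gamma\|AD_t^\rho U\|\le\|D_tU\|+\|AU\|+\|f\|.
\]

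The main obstacle is the $V$-part. The theorem assumes only $f\in C([0,T];H)$, whereas Theorem \ref{Rav115} uses $f\in C([0,T];D(A^\varepsilon))$, and that extra regularity is used only for $D_tV$. For $AV(t)$, estimate \eqref{Rav5201} already yields $\|AV(t)\|^2\le C_T\max\|f\|^2$ for $f\in C([0,T];H)$, through the kernel bound $\lambda B_\rho\le Ct^{\rho-1}$ and the generalized Minkowski inequality. This bound also covers $\|AV(T)\|$ in the estimate for $Y$. For $D_tV$, I would try to avoid Lemma \ref{Rav270} by using the equation: $D_tV=f-AV-\gamma AD_t^\rho V$. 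This reduces the problem to bounding $AD_t^\rho V$, whose Laplace symbol is $\lambda z^\rho/(z+\lambda+\lambda\gamma z^\rho)$, bounded uniformly in $\lambda$. So I would attempt a kernel estimate for it in the spirit of Lemma \ref{Rav260}, with the constant allowed to depend on $t$ and $T$.

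If this does not close with $f$ merely continuous, the fallback is to read the hypothesis as $f\in C([0,T];D(A^\varepsilon))$ and invoke \eqref{Rav391} directly. Summing the $Y$ and $V$ contributions then gives \eqref{Rav520} with a constant depending on $\rho,\gamma,\lambda_1,t,T$.
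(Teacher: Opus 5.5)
Your plan is the paper's proof in every essential step. You use the same representation of $U$; the paper gets it by solving for $\phi=U(0)$ in \eqref{Rav373}, which is your $U=V+Y$. You also use Lemma \ref{Rav361} as the uniform lower bound on $A_\rho(\lambda_k,T)$, \eqref{Rav5101} and \eqref{Rav5201} for the $V$-part, and the equation to recover $AD_t^\rho U$. Your bound $\lambda_kB_\rho(\lambda_k,t)\le\lambda_k$ for $D_tY$, which avoids the factor $t^{2(\rho-1)}$, is a harmless variant.

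What must be corrected is your primary route for bounding $D_tV$ when $f$ is only in $C([0,T];H)$. It cannot close, so your fallback is forced. A symbol bounded uniformly in $\lambda$ does not give a kernel with a $\lambda$-uniform integrable majorant. From $\gamma\lambda z^\rho\widehat{B}=1-z\widehat{B}-\lambda\widehat{B}$, the $k$-th mode of $\gamma AD_t^\rho V$ is $f_k$ convolved with $-D_tB_\rho(\lambda_k,\cdot)-\lambda_kB_\rho(\lambda_k,\cdot)$; equivalently, $D_tV_k=f_k+D_tB_\rho(\lambda_k,\cdot)*f_k$. The piece $\lambda_kB_\rho$ is dominated by $Ct^{\rho-1}$. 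The other piece, $-D_tB_\rho(\lambda,\cdot)\ge0$, has mass at most $1$ and concentrates at $t\sim\lambda^{-1/(1-\rho)}$. Rescaling the Laplace transform gives $B_\rho(\lambda,\lambda^{-1/(1-\rho)}\sigma)\to E_{1-\rho}(-\gamma\sigma^{1-\rho})$ (Mittag-Leffler) as $\lambda\to\infty$. Hence $|D_tB_\rho(\lambda,t)|$ is of size $1/t$ at $t=\lambda^{-1/(1-\rho)}$, and no integrable majorant uniform in $\lambda$ exists. This is exactly why Lemma \ref{Rav270} trades a factor $\lambda^\varepsilon$ for integrability.

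In fact \eqref{Rav520} is false as stated, and here is a counterexample.
\begin{itemize}
\item Pick eigenvalues $\lambda_{k_j}$ with $s_j=\lambda_{k_j}^{-1/(1-\rho)}$, $s_{j+1}\le s_j/4$ and $2s_1<t_0$.
\item Let $f=\sum_{j\le N}\chi_j(t)v_{k_j}$, where each $\chi_j$ is continuous, $0\le\chi_j\le1$, supported in $[t_0-2s_j,t_0-s_j]$, and equal to $1$ on the middle half. Then $\max_t\|f(t)\|\le1$ and $f(t_0)=0$.
\item Each mode satisfies $|D_tV_{k_j}(t_0)|\ge B_\rho(\lambda_{k_j},\tfrac54 s_j)-B_\rho(\lambda_{k_j},\tfrac74 s_j)$. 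By the scaling limit this tends to a positive constant, so for $k_j$ large, $\|D_tV(t_0)\|^2\ge c_0^2N$.
\item Take $\psi:=V(T)$. Then the solution of \eqref{Rav500} is $U=V$, and $\|\psi\|_1\le C_T$ by \eqref{Rav5201}.
\end{itemize}
The right side of \eqref{Rav520} therefore stays bounded while the left side blows up as $N\to\infty$, even if $C$ may depend on $t$. With weights $j^{-1/2}$ and $N=\infty$ you even get $f\in C([0,T];H)$ and $\psi\in D(A)$ with $D_tU(t_0)\notin H$, so no solution in the sense of Definition \ref{Rav10} exists.

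So the theorem holds only with $f\in C([0,T];D(A^\varepsilon))$ and with $C_\varepsilon\max_t\|f(t)\|_\varepsilon^2$ in place of $\max_t\|f(t)\|^2$. That is your fallback via \eqref{Rav391}. It is also what the paper's own argument actually proves: for the third series of $D_tU$ it cites \eqref{Rav5301}, which rests on Lemma \ref{Rav270} and the $D(A^\varepsilon)$ norm. Drop the first route and state the hypothesis this way; the rest of your proof then coincides with the paper's.
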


\begin{proof}
To find the solution, we determine the initial unknown data $\phi=U(0)$ using the condition $U(T)=\psi$. From the representation \eqref{Rav373}, we have
\[U(T)=\sum _{k=1}^{\infty } \, A_{\rho}(\lambda_k, T)\phi_{k} v_k
+\sum _{k=1}^{\infty } \, \left[\int\limits_0^T B_{\rho} (\lambda_k, T-\tau) f_k(\tau)\,d \tau\right] \, v_{k}=\sum_{k=1}^{\infty } \,\psi_k v_k.\]
Solving for the Fourier coefficients $\phi$, we obtain
\[
\phi_{k}
= \dfrac{1}{A_{\rho}(\lambda_k, T)}
\left(\psi_k - \int\limits_0^T B_{\rho} (\lambda_k, T-\tau) f_k (\tau)\,d \tau\right),
\quad k \geq 1.
\]
Substituting these coefficients back into the solution formula, the unique formal solution of the backward problem is expressed as follows:
\[
U(t)=
\sum _{k=1}^{\infty } \,
\dfrac{A_{\rho}(\lambda_k, t)}{A_{\rho}(\lambda_k, T)}
\left(\psi_k - \int\limits_0^T B_{\rho} (\lambda_k, T-\tau) f_k (\tau)\,d \tau\right) v_k
\]
\[
+\sum _{k=1}^{\infty } \,
\left[
\int\limits_0^t B_{\rho} (\lambda_k, t-\tau) f_k (\tau)\,d \tau
\right]
\, v_{k}.
\]

Let us proceed to the proof of the estimate \eqref{Rav520}. First, let us obtain an estimate for $U(t)$. The stability of the solution is established by estimating the three terms of the series separately.

The estimate of the first series of $U(t)$ follows from Parseval's equality, applying the first estimate of Lemma \ref{Rav230} for the numerator, Lemma \ref{Rav361} to estimate the denominator term $A_{\rho} (\lambda_k, t)$,
\[
\left\|\sum _{k=1}^{m} \, \dfrac{A_{\rho}(\lambda_k, t)\psi_k}{A_{\rho}(\lambda_k, T)}v_{k}\right\|^2
= \sum _{k=1}^{m} \, \left|\dfrac{A_{\rho}(\lambda_k, t)} {A_{\rho}(\lambda_k, T)}\psi_k \right|^2
\leq \dfrac{1}{C(\rho, \gamma, \lambda_1)} \sum _{k=1}^{m} \, |\psi_k|^2
\leq \dfrac{1}{C(\rho, \gamma, \lambda_1)} \|\psi\|^2.
\]

The second series $U(t)$ is estimated again using Parseval's equality, Lemma \ref{Rav230}, Lemma \ref{Rav361} and applying the first estimate of Lemma \ref{Rav260} to the term under the integral:
\[
\left\|\sum _{k=1}^{m} \,
\dfrac{A_{\rho}(\lambda_k, t)}{A_{\rho}(\lambda_k, T)}\left(\int\limits_0^T B_{\rho} (\lambda_k, T-\tau) f_k (\tau)\,d \tau\right) v_k\right\|^2
\]
\[
=
\sum _{k=1}^{m} \,\left|\dfrac{A_{\rho}(\lambda_k, t)}{A_{\rho}(\lambda_k, T)}\int\limits_0^T B_{\rho} (\lambda_k, T-\tau) f_k (\tau)\,d \tau\right|^{2}
\]
\[
\leq
\sum _{k=1}^{m} \, \left|\dfrac{A_{\rho}(\lambda_k, t)}
{A_{\rho}(\lambda_k, T)}\right|^2
\int\limits_0^T |B_{\rho} (\lambda_k, T-\tau)|^2 |f_k(\tau)|^2\,d \tau
\leq \dfrac{C_T}{C(\rho, \gamma, \lambda_1)} \max_{t \in [0, \, T]} \|f(t)\|^2.
\]
The estimate of the third series of $U(t)$ is given by \eqref{Rav5101}.

Next, let us obtain an estimate of $AU(t)$. For the first series $AU(t)$, the definition of the operator $A$, Parseval's equality, the first estimate of Lemma \ref{Rav230} and Lemma \ref{Rav361} give
\[
\left\|\sum _{k=1}^{m} \, \dfrac{\lambda_k A_{\rho}(\lambda_k, t)\psi_k}{A_{\rho}(\lambda_k, T)} v_{k}\right\|^2
=
\sum _{k=1}^{m} \,\left|\dfrac{\lambda_{k}A_{\rho}(\lambda_k, t)\psi_k}{A_{\rho}(\lambda_k, T)}\right|^{2}
\]
\[
\leq
\dfrac{1}{C(\rho, \gamma, \lambda_1)} \sum _{k=1}^{m} \, |\lambda_{k}\psi_k |^2
\leq \dfrac{1}{C(\rho, \gamma, \lambda_1)} \|\psi\|_1^2.
\]

The second series of $AU(t)$ is estimated by using Parseval's equality, applying the first estimate of Lemma \ref{Rav230}, Lemma \ref{Rav361},
and finally the second estimate of Lemma \ref{Rav260} to the term under the integral
\[
\left\|\sum _{k=1}^{m} \, \dfrac{\lambda_k A_{\rho}(\lambda_k, t)}{A_{\rho}(\lambda_k, T)}\left(\int\limits_0^T B_{\rho} (\lambda_k, T-\tau) f_k (\tau)\,d \tau\right) v_k\right\|^2
\]
\[
=
\sum _{k=1}^{m} \, \left| \dfrac{A_{\rho}(\lambda_k, t)}{A_{\rho}(\lambda_k, T)}\int\limits_0^T \lambda _{k}B_{\rho} (\lambda_k, T-\tau) f_k (\tau)\,d \tau\right|^{2}
\]
\[
\leq
\dfrac{C_T}{C(\rho, \gamma, \lambda_1)} \sum _{k=1}^{m} \,
\left| \int\limits_0^T |(T-\tau)^{\rho-1} f_k(\tau)|\,d \tau \right|^2
\]
(apply the generalized Minkowski inequality)
\[
\leq
\dfrac{C_T}{C(\rho, \gamma, \lambda_1)}
\left(
\int\limits_0^T (T-\tau)^{\rho-1} \left( \sum _{k=1}^{m} \, |f_k(\tau)|^2 \right)^{\frac{1}{2}}\,d \tau
\right)^2
\leq
\dfrac{C_T}{C(\rho, \gamma, \lambda_1)} \max_{t \in [0,\,T]} \|f(t)\|^2.
\]
The estimate of the third series of $AU(t)$ is given by \eqref{Rav5201}.

Lastly, let us obtain an estimate of $D_t U(t)$. Applying Parseval's equality, Lemma \ref{Rav280}, Lemma \ref{Rav361}, and the second estimate of Lemma \ref{Rav260} we have
\[
\left\| D_t \sum _{k=1}^{m} \,\dfrac{A_{\rho}(\lambda_k, t)\psi_k}{A_{\rho}(\lambda_k, T)} v_k \right\|^{2}
=
\sum _{k=1}^{m} \, \left| D_t
\dfrac{A_{\rho}(\lambda_k, t)}{A_{\rho}(\lambda_k, T)}
\psi_k
\right|^2
\]
\[
=
\sum _{k=1}^{m} \, \left|
\dfrac{-\lambda_k B_{\rho}(\lambda_k, t)}{A_{\rho}(\lambda_k, T)}
\psi_k
\right|^2
\leq \dfrac{C t^{2(\rho-1)}}{C(\rho, \gamma, \lambda_1)} \|\psi\|^2.
\]
An estimate for the second term is obtained by applying Parseval's equality, Lemma \ref{Rav280}, Lemma \ref{Rav361}, and Lemma \ref{Rav270}:
\[
\left\| D_t \sum _{k=1}^{m} \,\dfrac{A_{\rho}(\lambda_k, t)}{A_{\rho}(\lambda_k, T)}\int\limits_0^T B_{\rho} (\lambda_k, T-\tau) f_k (\tau)\,d \tau\right\|^{2}
\]
\[
=
\sum _{k=1}^{m} \, \left|
\dfrac{-\lambda_k B_{\rho}(\lambda_k, t)}{A_{\rho}(\lambda_k, T)}
\int\limits_0^T B_{\rho} (\lambda_k, T-\tau) f_k (\tau)\,d \tau
\right|^2
\]
\[
\leq
C t^{2(\rho-1)} \sum _{k=1}^{m} \,
\left(
\int\limits_0^T |(T-\tau)^{\rho-1} f_k(\tau)|\,d \tau
\right)^2
\]
(apply the generalized Minkowski inequality)
\[
\leq
C_T
\left(
\int\limits_0^T (T-\tau)^{\rho-1} \left( \sum _{k=1}^{m} \, |f_k(\tau)|^2 \right)^{\frac{1}{2}}\,d \tau
\right)^2
\leq
C_T \max_{t \in [0,\,T]} \|f(t)\|^2, \,\ t>0.
\]
As a result, the estimate of the third series of $D_tU(t)$ is given by \eqref{Rav5301}.

Therefore, it follows from the above
\[
\|AD_{t}^{\rho}U(t)\|^2 \leq \|D_{t} U(t)\|^2+||AU(t)||^{2}
\leq C(\|\psi\|_1^2+\max_{t \in [0,\,T]} \|f(t)\|^2).
\]
Hence, we arrive at the estimate in \eqref{Rav520}. This completes the proof of the theorem.
\end{proof}

\section{Conclusions}

In this paper, we investigate initial-boundary value, non-local, and backward problems for the Rayleigh--Stokes equation with a fractional Caputo derivative. Previously, the Rayleigh--Stokes equation was studied only with the Riemann--Liouville derivative. The choice of the Caputo derivative is justified primarily by the fact that the Rayleigh-Stokes equation involves a first-order derivative, and if such a derivative exists, then the Caputo derivative certainly exists. On the other hand, the Caputo derivative is a more natural generalization of ordinary derivatives.

Using the Fourier method, we find conditions on the problem data for each of the three problems that guarantee both the existence and uniqueness of the problems under consideration. In particular, it is shown that for the backward problem (i.e., $\alpha = 0$), it is necessary to require much more stringent conditions from the problem data than for the preodic problem (i.e., $\alpha = 1$).

\end{document}